\pgfplotsset{compat=1.18}
\numberwithin{equation}{section}
\newtheorem{conjecture}{Conjecture}[section]
\newtheorem{thm}{Theorem}[section]
\newtheorem{lemma}{Lemma}[section]
\newtheorem{definition}{Definition}[section]
\newtheorem{proposition}{Proposition}[section]
\newcommand{\R}{{\mathbb R}}
\newcommand{\N}{{\mathbb N}}
\newcommand{\Z}{{\mathbb Z}}
\newcommand{\kn}{\left(k,\frac{2}{n}\right)} 
\renewcommand{\j}{{\bm j}} 
\begin{document}  
	\title[]{A Schwartz-type Space for the $\left(k,\frac{2}{n}\right)-$Generalized Fourier Transform}
	
	\author{{{Nelson Faustino}~\orcidlink{0000-0002-9117-2021}
	}}

	\address[\textsc{Nelson~Faustino}]{Department of Mathematics and Center for R\&D in Mathematics and Applications (CIDMA), University of Aveiro, Campus Universit\'ario de Santiago, 3810-193 Aveiro, Portugal}
	\email{\href{mailto:nfaust@ua.pt}{nfaust@ua.pt}}
	\thanks{{Nelson~Faustino}~is supported by CIDMA under the Portuguese Foundation for Science and Technology 
	(FCT, \href{https://ror.org/00snfqn58}{https://ror.org/00snfqn58})  
	Multi-Annual Financing Program for R\&D Units,
	grants UID/4106/2025 and UID/PRR/4106/2025.}
	
	\author{{Selma Negzaoui}~\orcidlink{0000-0003-0898-1256}
	}
	\address[\textsc{Selma Negzaoui}]{Preparatory Institute of Engineering Studies of Monastir, University of Monastir, and Universit\'e de Tunis El Manar, Facult\'e des Sciences de Tunis. Laboratoire d'Analyse Math\'ematique et Applications LR11ES11, 2092 Tunis, Tunisie.
	}
	\email{\href{mailto:selma.negzaoui@fst.utm.tn}{selma.negzaoui@fst.utm.tn}}
   
\begin{abstract}
The Schwartz space $\mathcal{S}(\mathbb{R}^N)$ is not invariant under the $(k,a)$-generalized Fourier transform $\mathcal{F}_{k,a}$ unless $a=2$, and in general no such adapted space is known. For $N=1$ and $\displaystyle a=\frac{2}{n}$, $n\in\mathbb{N}$, we construct a tailored Schwartz-type space $\mathcal{S}_{k,n}(\mathbb{R})$ defined via seminorms built from natural second-order operators associated with the one-dimensional Dunkl Laplacian $\Delta_k$. We prove that  $\mathcal{S}_{k,n}(\mathbb{R})$ recovers the two basic features of the classical Schwartz space: invariance under the corresponding Fourier-type operator and density in the relevant weighted $L^p-$spaces.
To establish these results, we introduce the space $\mathcal{D}_{k,n}(\mathbb{R})$ of compactly supported smooth functions, which embeds continuously into $\mathcal{S}_{k,n}(\mathbb{R})$ and is dense in the weighted spaces $L^p(d\mu_{k,n})$, $1\le p<\infty$. These results provide the first Schwartz-type space for $\mathcal{F}_{k,a}$ that simultaneously ensures invariance and $L^p$-density, and admits an $\mathfrak{sl}(2,\mathbb{R})$-based description of the underlying operator structure.
\end{abstract}

\subjclass[2020]{22E60,~33C52,~42B10,~46E10}

\keywords{Schwartz type space, generalized Fourier transform, Dunkl operators, invariance of Schwartz type space, density in $L^p$ spaces
}

\maketitle


\section{Introduction}

\subsection{State of the Art}

The Schwartz space $\mathcal{S}(\mathbb{R}^N)$ provides a natural framework for Fourier analysis and tempered distributions because it is invariant under the Fourier transform $\mathcal{F}$. It is also dense in all $L^p-$spaces. Both the Hankel transform~\cite{Tr2001Book} and the Dunkl transform~\cite{Jeu93,ChTr,Tr02} share this invariance (we refer, for instance, to~\cite[Chapter VII]{Hormander03} for a classical discussion). 
However, for more general Fourier-like operators, such as the Jacobi and the Chebli-Trim\`eche operators among others (cf.~\cite{De,BeDa,Tr81}),  this picture breaks down.  \(\mathcal{S}(\mathbb R^N)\) typically fails to remain invariant, and no canonical replacement is available for the $(k,a)$-generalized Fourier transform (cf.~\cite{GoIvTi23}).

The $(k,a)$-generalized Fourier transform $\mathcal{F}_{k,a}$, introduced by Ben Sa\"id, Kobayashi and \O rsted \cite{BKO} by considering the $(k,a)$-deformation of the Hermite operator $\displaystyle \Delta-~\|x\|^2$, induced by the Laguerre-type operator
\begin{eqnarray*}
	\Delta_{k,a}:=\| x\|^{2-a}\Delta_{k}-\| x\|^a,&a>0,
\end{eqnarray*}
where $\Delta_{k}$ is the Dunkl Laplacian 
studied by Dunkl \cite{Du89,Du91,Du92}, de Jeu \cite{Jeu93,Jeu06}, R\"osler \cite{Ro98,Ro99,Ro03,Ro03-book}, Trim\`eche \cite{Tr01,Tr02} among others. The associated $(k,a)-${\it generalized Fourier transform} is introduced as follows:
\begin{eqnarray}	\label{FkaRN}	\mathcal{F}_{k,a}=\exp\left(\frac{i\pi}{2a}\left(~2\langle k \rangle+N+a-2~ \right)\right)\exp\left(\frac{i\pi}{2a}\Delta_{k,a}\right),\end{eqnarray}
where $k := \langle k \rangle$ denotes a multiplicity function invariant under the reflection group action.

The $(k,a)$-generalized Fourier transform admits a full spectral and kernel theory, including integral representations. However, no Schwartz space adapted to $\mathcal{F}_{k,a}$ was constructed. 
Recently, Gorbachev, Ivanov and Tikhonov demonstrated in \cite{GoIvTi23} that the classical framework is in fact inadequate in most cases. They proved that 
\[ \mathcal{F}_{k,a}\bigl(\mathcal{S}(\R^N)\bigr)\not\subset \mathcal{S}(\R^N) \] 
except when \(a=2\) (Dunkl case), by constructing an explicit counterexample based on the Gaussian function (see~\cite[Example 5.1]{GoIvTi23}). 

In this paper, we rigorously treat the case \(N=1\) and \(\displaystyle a=\frac{2}{n}\), where $n\in \N$. 
In this setting, we introduce a Schwartz-type space $\mathcal{S}_{k,n}(\mathbb{R})$, adapted to the $\kn-$generalized Fourier transform $\mathcal{F}_{k,n}:=\mathcal{F}_{k,\frac{2}{n}}$. First, we prove that $\mathcal{S}_{k,n}(\mathbb{R})$ is invariant under the transform $\mathcal{F}_{k,n}$. More precisely,
$$
\mathcal{F}_{k,n}\big(\mathcal{S}_{k,n}(\mathbb{R})\big)=\mathcal{S}_{k,n}(\mathbb{R}).
$$

This enables the mapping property \(\mathcal{F}_{k,n}^m: S_{k,n}(\mathbb{R}) \to S_{k,n}(\mathbb{R})\) for all $m\in \N$, as in the classical case.

Second, by considering the weighted Lebesgue spaces $L^p(d\mu_{k,n}):= L^p(\R,d\mu_{k,n})$, where the measure $d\mu_{k, n}$ is given by
\begin{equation}
	\label{knMeasure}
	d\mu_{k ,n}(x)=c_{k ,n}|x|^{2k +\frac{2}{n}-2}dx, \quad\text{and}\quad c_{k ,n}=\frac{1}{2\Gamma(k  n-\frac{n}{2}+1)}\left(\frac{n}{2}\right)^{k  n-\frac{n}{2}},\end{equation} 
we establish that 
$
\mathcal{S}_{k,n}(\mathbb{R}) \subset L^p(d\mu_{k,n}), \; 1\le p<\infty,
$
with dense embedding. So functions in $L^p(d\mu_{k,n})$ can be approximated in the $L^p$-norm by very ``regular'' functions from our space $\mathcal{S}_{k,n}(\mathbb{R})$. 

We also introduce a $\kn-$analogue $\mathcal{D}_{k,n}(\mathbb{R})$ of the space $\mathcal{D}(\mathbb{R})$ of compactly supported smooth functions, and we prove that \[ D_{k,n}(\mathbb{R}) \hookrightarrow S_{k,n}(\mathbb{R}) \hookrightarrow L^p(d\mu_{k,n}), \qquad 1\le p<\infty. \]

Our construction of the Schwartz type space is based on a Lie-algebraic
characterization via the $\mathfrak{sl}(2,\mathbb{R})$ generators.
More precisely, $\mathcal{S}_{k,n}(\mathbb{R})$ is defined by replacing $X=xI$ and $\displaystyle D=\frac{d}{dx}$ in the seminorms of $\mathcal{S}(\mathbb{R})$ by the operators $\left|~x~\right|^{\frac{2}{n}}I$ and $\left|~x~\right|^{2-\frac{2}{n}}\Delta_k$, and the function $f$ by the sequence of functions $\left(X^\ell D^\ell f\right)_{\ell\in\mathbb{N}_0}$.

\subsection{Organization of the paper}
In Section~\ref{ApproachSection}, we state the three main theorems of the paper. In Section~\ref{Preliminaries} we recall the relevant properties of the \(\kn-\)generalized kernel and the associated translation and convolution operators. In Section~\ref{knSchwartz-section} we develop the \(\mathfrak{sl}(2,\mathbb R)-\)based description of \(\mathcal{S}_{k,n}(\mathbb R)\) and prove the invariance of \(\mathcal{S}_{k,n}(\mathbb R)\) under the $\kn-$generalized Fourier transform, as well as the continuity of the embedding \(\mathcal{D}_{k,n}(\mathbb R)\hookrightarrow \mathcal{S}_{k,n}(\mathbb R)\). In Section~\ref{Lp-section}, we prove the embedding and density properties in \(L^p(d\mu_{k,n})\). Finally, in Section~\ref{ConclusionOpen} we discuss how our one-dimensional construction serves as a test case for higher dimensions and more general parameters \(a\), and formulates a conjecture on the existence of adapted Schwartz-type spaces \(\mathcal{S}_{k,a}(\mathbb R^N)\), in a broader context.

\section{Definitions and Main Results}\label{ApproachSection}

Here and throughout the paper, we use the standard notations
\begin{eqnarray*}
	\displaystyle \N=\{n\in \Z~:~n\geq 1\}, \text{ and }&\N_0=\N\cup \{0\} .
\end{eqnarray*}

We observe that the Schwartz space $\mathcal{S}(\mathbb{R})$ consists of functions $f\in C^\infty(\mathbb{R})$ satisfying the seminorm condition
\begin{eqnarray} 
	\label{sup-Schwartz} \sup_{x\in \R} \left|~X^\alpha D^\beta f(x) ~\right|<\infty, & \text{for all} & \alpha,\beta \in \N_0. 
\end{eqnarray}

Here, $\displaystyle X = xI$, $\displaystyle D = \frac{d}{dx}$ and the identity operator $I$ induce the Weyl--Heisenberg algebra of dimension 3, whose Lie algebraic structure underlies the invariance of $\mathcal{S}(\mathbb{R})$ for the Fourier transform. 

In our setting, however, the Weyl–Heisenberg picture breaks down, because there is no $\kn-$generalized Dunkl operator $T_{k,\frac{2}{n}}$ satisfying $\left(T_{k,\frac{2}{n}}\right)^2=\left|x\right|^{2-\frac{2}{n}}\Delta_{k}$ for $\frac{2}{n}\neq 2$. 
Note that \(\Delta_k\), in the one-dimensional setting, is a differential-difference operator associated with the abelian group \(\mathbb{Z}_2\). It acts componentwise as:
\begin{eqnarray}\label{DunklLaplacian}
	(\Delta_{k} f)(x)=f''(x)+\frac{2k}{x}f'(x)-k\frac{f(x)-f(-x)}{x^2}, & x \in \R \setminus \{0\}.
\end{eqnarray}

Following \cite{BKO}, we replace the Weyl--Heisenberg generators $X,D$ by $\kn$-analogues that realize $\mathfrak{sl}(2,\R)$, yielding $\mathcal{S}_{k,n}(\mathbb{R})$:
\begin{definition}\label{knSchwartz}
	We define $\mathcal{S}_{k,n}(\R)$ as the space of all {$f\in C^\infty(\R\setminus\{0\})$}
	such that 
	\begin{eqnarray}
		\label{sup-kn}
		\sup_{x\in \R\setminus\{0\}}~\left|~\left(\left|~x~\right|^{\frac{2}{n}}\right)^{\alpha} \left(\left|~x~\right|^{2-\frac{2}{n}}\Delta_{k}\right)^\beta \left(x^\ell f^{(\ell)}(x)\right)~\right|<\infty, &\mbox{for all}& \alpha,\beta,\ell \in \N_0.
	\end{eqnarray}
\end{definition}

In short, $\mathcal{S}_{k,n}(\R)$ is defined from $\mathcal{S}(\R)$ by replacing $X,D,f$ with $\left|~x~\right|^{\frac{2}{n}}I$, $\left|~x~\right|^{2-\frac{2}{n}}\Delta_{k}$, $(X^\ell D^\ell f)_{\ell\in\N_0}$, respectively. This parallels the generators of $\mathfrak{sl}(2,\R)$, defined as (cf. \cite[Theorem 3.2]{BKO}):
\begin{eqnarray}
	\label{LadderOp}
	\mathbb{E}_{k,\frac{2}{n}}^+=\dfrac{in}{2}\left|~x~\right|^{\frac{2}{n}},~~\mathbb{E}_{k,\frac{2}{n}}^-=\dfrac{in}{2}\left|~x~\right|^{2-\frac{2}{n}}\Delta_k,~~
	\mathbb{H}_{k,\frac{2}{n}}=nx\frac{d}{dx}+\left(kn+1-\frac{n}{2}\right)I,
\end{eqnarray} 

These generators, satisfying the commutation relations
\begin{eqnarray}
	\label{sl2} \left[\mathbb{E}_{k,\frac{2}{n}}^+,\mathbb{E}_{k,\frac{2}{n}}^-\right]=\mathbb{H}_{k,\frac{2}{n}}, & \left[\mathbb{H}_{k,\frac{2}{n}},\mathbb{E}_{k,\frac{2}{n}}^+\right]=2\mathbb{E}_{k,\frac{2}{n}}^+, & \left[\mathbb{H}_{k,\frac{2}{n}},\mathbb{E}_{k,\frac{2}{n}}^-\right]=-2\mathbb{E}_{k,\frac{2}{n}}^-,
\end{eqnarray}
are encoded in the one‑dimensional intertwining relations established in~\cite[Theorem 5.6]{BKO} for a dense subspace of $L^2(d\mu_{k,n})$:
\begin{eqnarray}
	\label{knFourierIntertwining}
	\begin{array}{lll}
		\displaystyle \mathcal{F}_{k,n}\circ x \frac{d}{dx}=-\left(x\dfrac{d}{dx}+\left(2k+\frac{2}{n}-1\right)\right) \circ \mathcal{F}_{k,n},\\
		\displaystyle \mathcal{F}_{k,n}\circ \mid x \mid^\frac{2}{n}I=-\mid x \mid^{2-\frac{2}{n}}\Delta_k \circ \mathcal{F}_{k,n},
		\\
		\displaystyle \mathcal{F}_{k,n} \circ \mid x \mid^{2-\frac{2}{n}}\Delta_k= -\mid x \mid^\frac{2}{n}I~\circ \mathcal{F}_{k,n}.
	\end{array}
\end{eqnarray}

Our first main theorem, proved in Subsection \ref{ProofTheorem1}, extends the invariance property $\mathcal{F}(\mathcal{S}(\R))=\mathcal{S}(\R)$ of the  classical Fourier transform $\mathcal{F}$. The proof combines the intertwining relations \eqref{knFourierIntertwining} with the algebraic characterization obtained in Subsection~\ref{knSchwartz-recursive}.
It states as follows:
\begin{thm}\label{Fkn-Schwartz-Theorem}
	For the $\kn$-generalized Fourier transform $\mathcal{F}_{k,n}$, one has  $$\mathcal{F}_{k,n}\left(\mathcal{S}_{k,n}(\R)\right)=\mathcal{S}_{k,n}(\R).$$
\end{thm}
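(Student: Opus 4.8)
The plan is to give $\mathcal{S}_{k,n}(\R)$ an intrinsic description in terms of the enveloping algebra of the $\mathfrak{sl}(2,\R)$-triple in \eqref{LadderOp}, and then exploit that $\mathcal{F}_{k,n}$ implements a quarter turn of that triple. Writing $\mathbb{E}^{\pm}:=\mathbb{E}_{k,\frac{2}{n}}^{\pm}$ and $\mathbb{H}:=\mathbb{H}_{k,\frac{2}{n}}$, the first step is to recast the seminorms \eqref{sup-kn}. The multiplier $\left(|x|^{\frac{2}{n}}\right)^{\alpha}$ is a scalar multiple of $(\mathbb{E}^{+})^{\alpha}$ and $\left(|x|^{2-\frac{2}{n}}\Delta_{k}\right)^{\beta}$ a scalar multiple of $(\mathbb{E}^{-})^{\beta}$, while the elementary identity $x^{\ell}f^{(\ell)}=\prod_{j=0}^{\ell-1}\left(x\frac{d}{dx}-j\right)f$ exhibits $x^{\ell}f^{(\ell)}$ as a degree-$\ell$ polynomial in $x\frac{d}{dx}$, hence in $\mathbb{H}$. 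Since this change of generators is triangular and invertible, and since every element of the enveloping algebra $U(\mathfrak{sl}(2,\R))$ is a finite combination of the Poincar\'e--Birkhoff--Witt monomials $(\mathbb{E}^{+})^{\alpha}(\mathbb{E}^{-})^{\beta}\mathbb{H}^{\gamma}$, I would record the equivalent description
\[
\mathcal{S}_{k,n}(\R)=\Big\{\,f\in C^{\infty}(\R\setminus\{0\}):\ \sup_{x\in\R\setminus\{0\}}\left|\,P f(x)\,\right|<\infty\ \text{ for every } P\in U(\mathfrak{sl}(2,\R))\,\Big\}.
\]
This is precisely the recursive characterization announced in Subsection~\ref{knSchwartz-recursive}, and it makes $\mathcal{S}_{k,n}(\R)$ manifestly stable under each of $\mathbb{E}^{\pm}$ and $\mathbb{H}$.

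The decisive step is the covariance of $\mathcal{F}_{k,n}$ with respect to the triple. Because \eqref{FkaRN} exhibits $\mathcal{F}_{k,n}$, up to a unimodular scalar, as $\exp\!\left(\frac{i\pi}{2a}\Delta_{k,a}\right)$ with $a=\frac{2}{n}$, and because $\Delta_{k,a}$ is a scalar multiple of the compact generator $\mathbb{E}^{+}-\mathbb{E}^{-}$, conjugation by $\mathcal{F}_{k,n}$ realizes $\mathrm{Ad}$ of a quarter turn of the triple. A direct computation with the relations $[\mathbb{H},\mathbb{E}^{\pm}]=\pm 2\mathbb{E}^{\pm}$ and $[\mathbb{E}^{+},\mathbb{E}^{-}]=\mathbb{H}$ (cf.~\cite{BKO}) then yields
\[
\mathcal{F}_{k,n}\,\mathbb{E}^{\pm}=-\mathbb{E}^{\mp}\,\mathcal{F}_{k,n},\qquad \mathcal{F}_{k,n}\,\mathbb{H}=-\mathbb{H}\,\mathcal{F}_{k,n}.
\]
To turn these formal operator identities into genuine ones I would verify them first on the Gaussians $\left(e^{-sn|\cdot|^{\frac{2}{n}}}\right)_{s>0}$ using the explicit eigenrelation \eqref{Expkn} (differentiating in $s$ produces the action of $\mathbb{E}^{+}$, applying $|x|^{2-\frac{2}{n}}\Delta_{k}$ that of $\mathbb{E}^{-}$), and then propagate them to all of $\mathcal{S}_{k,n}(\R)$ by repeatedly applying ladder operators and invoking the totality of that family. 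In particular $\mathrm{Ad}(\mathcal{F}_{k,n}^{-1})$ is an automorphism of $U(\mathfrak{sl}(2,\R))$ carrying each monomial $P$ to another element of the algebra.

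With the covariance available the invariance follows quickly. For $f\in\mathcal{S}_{k,n}(\R)$ and $P\in U(\mathfrak{sl}(2,\R))$ I would write
\[
P\left(\mathcal{F}_{k,n}f\right)=\mathcal{F}_{k,n}\left(\widetilde{P}f\right),\qquad \widetilde{P}:=\mathcal{F}_{k,n}^{-1}\,P\,\mathcal{F}_{k,n}\in U(\mathfrak{sl}(2,\R)),
\]
so that, by the $L^{1}$--$L^{\infty}$ mapping property of the transform,
\[
\sup_{x\in\R\setminus\{0\}}\left|\,P\left(\mathcal{F}_{k,n}f\right)(x)\,\right|\ \le\ C\,\big\|\,\widetilde{P}f\,\big\|_{L^{1}(d\mu_{k,n})}.
\]
The right-hand side is finite because $\widetilde{P}f\in\mathcal{S}_{k,n}(\R)$ by the stability of the space under $U(\mathfrak{sl}(2,\R))$, and because the $(\mathbb{E}^{+})^{\alpha}$-seminorms force faster-than-polynomial decay in $|x|$, whence $\mathcal{S}_{k,n}(\R)\subset L^{1}(d\mu_{k,n})$. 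This gives $\mathcal{F}_{k,n}\left(\mathcal{S}_{k,n}(\R)\right)\subseteq\mathcal{S}_{k,n}(\R)$; running the identical argument with the inverse transform, which obeys the opposite quarter turn, yields the reverse inclusion and hence equality.

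The main obstacle I anticipate is the rigorous justification of the covariance identities: $\mathcal{F}_{k,n}$ is an exponential of the unbounded operator $\Delta_{k,a}$, so the $\mathrm{Ad}$-computation is only formal until domains and convergence on $\R\setminus\{0\}$ are controlled. The cleanest remedy is to anchor everything on \eqref{Expkn}, which furnishes a concrete, $\mathcal{F}_{k,n}$-stable, total family on which the ladder relations can be checked by explicit differentiation, after which the enveloping-algebra structure transports them everywhere. A secondary technical point is the $L^{1}$--$L^{\infty}$ bound for $\mathcal{F}_{k,n}$, which rests on boundedness of the deformed kernel in the one-dimensional $\left(k,\frac{2}{n}\right)$ setting; this is where the standing integrality assumption $\frac{2}{a}=n\in\N$ is genuinely used.
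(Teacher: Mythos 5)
Your proposal is correct and follows essentially the same route as the paper: characterize $\mathcal{S}_{k,n}(\R)$ by stability under the $\mathfrak{sl}(2,\R)$-triple (the paper makes your PBW/triangular change-of-generators step explicit through the commutator recursion of Lemma~\ref{RecursiveLemma} and the Stirling-number identities of Lemma~\ref{Schwartz-Lemma} and Proposition~\ref{Schwartz-Proposition}), transfer the seminorms through the intertwining relations \eqref{knFourierIntertwining}, and conclude via the uniform kernel bound \eqref{Bcondition} together with $\mathcal{S}_{k,n}(\R)\subset L^1(d\mu_{k,n})$. The covariance identities you flag as the main obstacle are handled in the paper simply by citing \cite[Theorem 5.6]{BKO}, so no genuinely different argument is introduced there either.
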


Next,~we introduce a $\kn-$analogue of the space $\mathcal{D}(\R)$ of $C^\infty-$functions with compact support:

\begin{definition}\label{Dkn-definition}
	We introduce the space $\mathcal{D}_{k,n}(\R)$ of $C^\infty-$functions on $\R\setminus\{0\}$ with bounded support,  satisfying 
	\begin{eqnarray*}
		\sup_{\substack{x\in \R\setminus\{0\} }}~\left|\displaystyle\left(~\left|~x~\right|^{2-\frac{2}{n}}\Delta_{k}~\right)^\beta\left(x^\ell f^{(\ell)}(x)\right) \right|<\infty,&\mbox{for all}& \beta,\ell\in\N_0.
	\end{eqnarray*}
\end{definition}

In Subsection \ref{ProofTheorem2} we extend the classical embedding
\(
\mathcal{D}(\mathbb{R}) \hookrightarrow \mathcal{S}(\mathbb{R})
\)
to the spaces \(\mathcal{D}_{k,n}(\mathbb{R})\) and \(\mathcal{S}_{k,n}(\mathbb{R})\).
\begin{thm}\label{Dkn-thm}
	The embedding
	$\mathcal{D}_{k,n}(\R)\hookrightarrow \mathcal{S}_{k,n}(\R)$ is continuous.
\end{thm}

In Section \ref{Lp-section} we establish embedding and density results for $\mathcal{S}_{k,n}(\R)$ and $L^p(d\mu_{k,n})$, extending well‑known properties of the classical Schwartz space.

\begin{thm}\label{DensityThm} Let $1\leq p<\infty$. Then the following statements hold:
	\begin{itemize}
		\item[\bf (i)] The embedding
		$\mathcal{S}_{k,n}(\R)\hookrightarrow L^p(d\mu_{k,n})$ is continuous.
		\item[\bf (ii)] 	$\mathcal{S}_{k,n}(\R)$ is a dense subspace in $L^p(d\mu_{k,n})$.
	\end{itemize}
\end{thm}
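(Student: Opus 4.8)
The plan is to treat the two assertions separately: the continuity in (i) by direct seminorm estimates, and the density in (ii) by reducing to the classical situation on compact annuli that avoid the origin. For (i), I would write
$\|f\|_{L^p(d\mu_{k,n})}^p = c_{k,n}\int_\R |f(x)|^p\,|x|^{2k+\frac{2}{n}-2}\,dx$
and split the integral over $\{|x|\le 1\}$ and $\{|x|>1\}$. On $\{|x|\le 1\}$ I would bound $|f|$ by the seminorm $\sup_{x\neq 0}|f(x)|$ (the case $\alpha=\beta=\ell=0$ in Definition \ref{knSchwartz}) and use that the weight $|x|^{2k+\frac{2}{n}-2}$ is integrable near the origin; here the standing assumption $2k+\frac{2}{n}-2>-1$ — equivalently the positivity $kn-\frac{n}{2}+1>0$ of the argument of the Gamma factor appearing in $c_{k,n}$ — is exactly what is needed. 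On $\{|x|>1\}$ I would invoke the decay seminorm $\sup_{x\neq 0}|x|^{\frac{2\alpha}{n}}|f(x)|$ (the case $\beta=\ell=0$) with $\alpha$ chosen so large that $\frac{2p\alpha}{n}>2k+\frac{2}{n}-1$, which forces the integrand to be integrable at infinity. Adding the two contributions bounds $\|f\|_{L^p(d\mu_{k,n})}$ by a finite combination of seminorms of $f$, yielding both finiteness and continuity of the embedding.

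For (ii), the key structural observation is that $\mu_{k,n}(\{0\})=0$, so $L^p(d\mu_{k,n})$ may be regarded as an $L^p$-space over the locally compact space $\R\setminus\{0\}$, on which $d\mu_{k,n}$ is absolutely continuous with respect to Lebesgue measure and whose density is bounded above and below by positive constants on every compact annulus $\{r\le|x|\le R\}$ with $0<r<R$. I would first record the inclusions $C_c^\infty(\R\setminus\{0\})\subseteq \mathcal{D}_{k,n}(\R)\subseteq \mathcal{S}_{k,n}(\R)$: the first holds because a smooth function with compact support away from the origin trivially satisfies the seminorm bounds of Definition \ref{Dkn-definition} (the coefficients of $|x|^{2-\frac{2}{n}}\Delta_k$ are smooth off the origin, so all relevant quantities are continuous with compact support, hence bounded), and the second is Theorem \ref{Dkn-thm}(i). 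It therefore suffices to prove that $C_c^\infty(\R\setminus\{0\})$ is dense in $L^p(d\mu_{k,n})$.

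To establish this density I would proceed in two moves. Given $f\in L^p(d\mu_{k,n})$, the truncations $f_N:=f\,\mathbf{1}_{\{1/N\le|x|\le N\}}$ converge to $f$ in $L^p(d\mu_{k,n})$ by dominated convergence, since $|f-f_N|^p\le |f|^p\in L^1(d\mu_{k,n})$ and $f_N\to f$ pointwise almost everywhere. Each $f_N$ is supported in a fixed compact annulus, where the weight is two-sided comparable to Lebesgue measure; hence approximating $f_N$ in $L^p(d\mu_{k,n})$ reduces to the classical density of $C_c^\infty$ in $L^p(dx)$ over that annulus, achieved by mollification with a kernel of support small enough to keep the result inside $\R\setminus\{0\}$. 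Chaining the two approximations produces a sequence in $C_c^\infty(\R\setminus\{0\})\subseteq \mathcal{S}_{k,n}(\R)$ converging to $f$, which proves density.

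I expect the only genuinely delicate point to be the behaviour of the singular weight at the origin. In (i) it dictates the integrability threshold and must be matched against the standing parameter assumption; in (ii) it is what obstructs a naive transfer of the classical argument, which is precisely why excising the null point $x=0$ and working on annuli where the weight is comparable to Lebesgue measure is essential. Everything else — the choice of the decay exponent $\alpha$, the dominated convergence step, and the mollification — is routine.
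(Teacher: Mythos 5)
Your proof is correct, but it takes a genuinely different route from the paper, most notably in part (ii). For (i), your splitting of the integral at $|x|=1$ and the use of the two seminorms $\sup_{x\neq 0}|f(x)|$ and $\sup_{x\neq 0}|x|^{2\alpha/n}|f(x)|$ (with $\frac{2p\alpha}{n}>2k+\frac{2}{n}-1$) is essentially the same estimate the paper obtains by writing $|f|=\bigl(1+|x|^{2/n}\bigr)^{-\beta}\bigl(1+|x|^{2/n}\bigr)^{\beta}|f|$ and checking that $\bigl(1+|\cdot|^{2/n}\bigr)^{-\beta}\in L^p(d\mu_{k,n})$ for $\beta>\frac{n}{2p}\bigl(2k+\frac{2}{n}-1\bigr)$; the paper additionally routes the argument through the auxiliary functions $g_m$ of Proposition \ref{LpProposition}, which is not strictly needed for the embedding itself, so your version is leaner. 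For (ii), the paper approximates $f\in L^p(d\mu_{k,n})$ by $\varphi_r\star_{k,n}g$ with $g\in\mathcal{D}(\R)$ and $(\varphi_r)_{r>0}$ an approximate identity built from $\varphi\in\mathcal{D}_{k,n}(\R)$, invoking the approximation-of-identity theorem of Ben Sa\"id--Negzaoui and Proposition \ref{Lem} to keep $\varphi_r\star_{k,n}g$ inside $\mathcal{D}_{k,n}(\R)$; you instead observe that $C^\infty_c(\R\setminus\{0\})\subseteq\mathcal{D}_{k,n}(\R)\subseteq\mathcal{S}_{k,n}(\R)$ (valid, since for $f$ supported in a compact set away from the origin every iterate of $|x|^{2-\frac{2}{n}}\Delta_k$ and of $x^\ell\frac{d^\ell}{dx^\ell}$ remains continuous with support in $\operatorname{supp}(f)\cup(-\operatorname{supp}(f))$, hence bounded), truncate to annuli $\{1/N\le|x|\le N\}$ via dominated convergence, and mollify classically where the weight is two-sided comparable to Lebesgue measure. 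Your argument is more elementary and self-contained --- it avoids the generalized translation, the convolution $\star_{k,n}$, Young's inequality, and the stability result of Proposition \ref{Lem} entirely --- at the cost of not exhibiting the convolution structure that the paper develops for its own sake (and which is reused elsewhere, e.g.\ in the kernel estimates of Lemma \ref{lem}). Both proofs are complete; the paper's buys machinery, yours buys brevity.
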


In Subsection~\ref{ProofTheorem3} we prove Theorem~\ref{DensityThm} by combining Theorem~\ref{Dkn-thm} with the results of Subsection~\ref{key-subsection}. In particular, we derive the density of $\mathcal{S}_{k,n}(\mathbb{R})$ in $L^p(d\mu_{k,n})$ from the chain of continuous embeddings
\[
\mathcal{D}_{k,n}(\mathbb{R}) \hookrightarrow \mathcal{S}_{k,n}(\mathbb{R}) \hookrightarrow L^p(d\mu_{k,n}),
\]
and we also show that $\mathcal{D}_{k,n}(\mathbb{R})$ is dense in $L^p(d\mu_{k,n})$, using an approximation-of-identity theorem due to Ben Sa\"id and the second author~\cite{BeNe2}. Thus Theorems~\ref{Dkn-thm} -- \ref{DensityThm}  confirm that the $\kn-$generalized Schwartz space $\mathcal{S}_{k,n}(\R)$ enjoys invariance under $\mathcal{F}_{k,n}$ as well as embedding and density properties analogous to those of $\mathcal{S}(\R)$. 

\section{Preliminaries}\label{Preliminaries}

This section reviews key properties of the $\kn$-generalized kernel $B_{k,n}$ and the associated translation and convolution operators~\cite[Theorem 4.2]{BNS}, which are essential for proving the invariance $\mathcal{F}_{k,n}\left(\mathcal{S}_{k,n}(\R)\right)= \mathcal{S}_{k,n}(\R)$ (Theorem~\ref{Fkn-Schwartz-Theorem}) and the density of $\mathcal{S}_{k,n}(\R)$ in $L^p(d_{k,n})$ (Theorem~\ref{DensityThm}).

\subsection{The $\kn-$generalized kernel $B_{k,n}$}\label{Bkn-section}

The Schwartz kernel theorem (see \cite[Theorem 5.2.1]{Hormander03}) guarantees that the $(k,a)-$generalized Fourier transform, defined in~\eqref{FkaRN}, has an integral representation via a continuous symmetric kernel $B_{k,a}$ that satisfies
\[
B_{k,a}(0,y) = 1, \quad \text{for all } y \in \R^N.
\]

By \cite[Proposition 2.1]{GoIvTi23}, when $N=1$, $\displaystyle a=\frac{2}{n}$ ($n \in \N$), and $\displaystyle k\geq \frac{1}{2} - \frac{a}{4}$, the $\kn-$generalized Dunkl kernel $\displaystyle B_{k,n}:=B_{k,\frac{2}{n}}$ is uniformly bounded:
\begin{equation}
	\label{Bcondition}
	\forall x,y\in\mathbb{R}\qquad \left|B_{k,n}(x,y)\right|\le M,
\end{equation}
for some constant $0 < M < \infty$.


The bound~\eqref{Bcondition} yields the integral form of the $\kn-$generalized Fourier transform for $f\in L^1(d\mu_{k,n})$:
\begin{equation}
	\label{FkaRN2}
	(\mathcal{F}_{k,n}f)(y)=\int_{\mathbb{R}} f(x)\, B_{k,n}(x,y)\, d\mu_{k,n}(x), \quad y\in\mathbb{R},
\end{equation}
with the $L^1-L^\infty$ estimate
\begin{equation}
	\label{LinftyL1}
	\| \mathcal{F}_{k,n}f\|_{L^\infty(d\mu_{k,n})}\le M\, \|f\|_{L^1(d\mu_{k,n})}.
\end{equation}

Moreover, \cite[Theorem 5.3]{BKO} gives the inversion formula
\begin{equation}
	\label{FourierInversion}
	(\mathcal{F}_{k,n}^{-1}f)(x)= (\mathcal{F}_{k,n}f)\left(( -1)^n x\right),\quad x\in\mathbb{R}.
\end{equation}

Thus,
\begin{equation}
	\label{FourierInversionIntegral}
	f(x) = \int_{\mathbb{R}} (\mathcal{F}_{k,n}f)(y)\, B_{k,n}\left((-1)^n x, y\right)\, d\mu_{k,n}(y), \quad x\in\mathbb{R}.
\end{equation}

The kernel satisfies the joint eigenfunction equation~\cite[Theorem 5.7]{BKO}:
\begin{equation}
	\label{JointEigenfunction}
	|x|^{2-\frac{2}{n}} \, (\Delta_{k}B_{k,n})(x,y) = -|y|^{\frac{2}{n}} \, B_{k,n}(x,y), \quad x,y \in \mathbb{R}.
\end{equation}

As outlined in \cite[Section 4]{BKO}, it decomposes, $ x,y\in\R$, as
\begin{eqnarray}
	\label{Bkn}
	B_{k,n}(x,y) =  \j_{kn-\frac{n}{2}}\Bigl(n|xy|^{\frac{1}{n}}\Bigr) +  (-i)^n \left(\frac{n}{2}\right)^n \frac{\Gamma\Bigl(kn-\frac{n}{2}+1\Bigr)}{\Gamma\Bigl(kn+\frac{n}{2}+1\Bigr)} \, xy \, \j_{kn+\frac{n}{2}}\Bigl(n|xy|^{\frac{1}{n}}\Bigr),
\end{eqnarray}
where $\j_{\nu}$ is the normalized Bessel function: 
\begin{eqnarray}\label{normalizedBessel}
\j_{\nu}(z) = \Gamma(\nu+1)\left(\frac{z}{2}\right)^{-\nu} J_{\nu}(z),
\end{eqnarray}
with $J_\nu$ the standard Bessel function of the first kind~\cite[Chapter 1]{Tr2001Book}.

Integral representations~\cite[Chapter VII, 7.3.2 (3)]{Bateman53}, \cite[Chapter X, 10.9 (38)]{Bateman53} (or~\cite[Corollary 2.7, Remark 2.8]{GoIvTi23}) yield
\begin{eqnarray*}
	J_\nu(z)=\frac{\left(\frac{z}{2}\right)^\nu}{\sqrt{\pi}\Gamma\left(\nu+\frac{1}{2}\right)}\int_{-1}^{1}(1-t^2)^{\nu-\frac{1}{2}}e^{itz}dt,~~\Re(\nu)>-\frac{1}{2} \\ \ \\
	\int_{-1}^{1} e^{i x t}\, C_n^{\lambda}(t)\, (1-t^2)^{\lambda-\frac{1}{2}}\, dt=\\
	= i^{\,n}\,\sqrt{\pi}\,\Gamma\!\left(\lambda+\frac{1}{2}\right) \frac{\Gamma(2\lambda+n)}{n!\Gamma(2\lambda)}
	\left(\frac{2}{z}\right)^{\lambda} J_{n+\lambda}(z),~~\Re(\lambda) > -\frac{1}{2},
\end{eqnarray*}
where $C_n^\lambda$ denotes the Gegenbauer polynomials (cf. \cite[Chapter VII, 7.3.2 (3)]{Bateman53} and \cite[Chapter X, 10.9 (38)]{Bateman53}). That allows us to rewrite \eqref{Bkn} as follows (see also \cite[Corollary 2.7]{GoIvTi23} and \cite[Remark 2.8]{GoIvTi23}):
\begin{eqnarray}
	\label{BknIntegral}
B_{k,n}(x,y)=\frac{\Gamma\left(kn-\frac{n}{2}+1\right)}{\sqrt{\pi}~\Gamma\left(kn-\frac{n}{2}+\frac{1}{2}\right)}\int_{-1}^{1} {\bm b}_{k,n}(xy,t)~(1-t^2)^{kn-\frac{n}{2}-\frac{1}{2}}e^{itn|xy|^{\frac{1}{n}}}dt,
\end{eqnarray}
with $${\bm b}_{k,n}(z,t)=1+(-1)^n\frac{n!\Gamma(2\lambda)}{\Gamma(2\lambda+n)}\operatorname{sgn}(z)C_n^{kn-\frac{n}{2}}(t).$$ 


The next lemma provides polynomial growth estimates for the radial derivative of the kernel~ $B_{k,n}(x,y)$. These will later be used to justify differentiation under the integral sign and to control seminorms built from the operator $\displaystyle x\,\frac{d}{dx}.$

\begin{lemma}\label{Bkn-xdxLemma}
	For every $\ell\in \N_0$, there exists a polynomial of degree $\ell$, ${\bm M}_\ell$, with positive coefficients such that
\begin{equation}
	\label{Bcondition-ell}
\left|\left( nx\frac{d}{dx}\right)^\ell B_{k,n}(x, y)\right|\le {\bm M}_\ell\left(n|xy|^{\frac{1}{n}}\right),\qquad  	x,y\in\mathbb{R}.
\end{equation}
\end{lemma}

\begin{proof}
Let $\displaystyle u=itn|xy|^{\frac{1}{n}}$. Then $\displaystyle nx\frac{d}{dx}\left(u^j\right)=ju^j$ ($j\in\mathbb{N}_0$), since $\displaystyle x\frac{d}{dx}=|x|\frac{d}{d|x|}$. 
	
The product rule then gives
	\begin{eqnarray*}
		nx\dfrac{d}{dx}\left(u^je^{u}\right)=(ju^j+u^{j+1})~e^{u}, & j\in \mathbb{N}_0
	\end{eqnarray*}
	
Inductively, 
	$$
	\left(n x\frac{d}{dx}\right)^\ell\left(e^{u}\right)={\bm P}_\ell(u)e^{u},
	$$
holds for every $\ell\in \N_0$,	where ${\bm P}_\ell$ is a polynomial of degree $\ell\in \mathbb{N}_0$ with positive coefficients.
	
Since ${\bf b}_{k,n}(z,t)(1-t^2)^{kn - \frac{n}{2} -\frac{1}{2}} e^{i t n |z|^{\frac{1}{n}}}$ and is bounded for every $(t,z)\in [-1,1] \times K$ for compact $K$, the dominated convergence theorem justifies pulling $\left(n x \frac{d}{dx}\right)^\ell$ under the integral in \eqref{BknIntegral}. Thus,
	\begin{eqnarray}
		\label{BknIntegral-ell}
		\left(n x\frac{d}{dx}\right)^\ell	B_{k,n}(x,y) = \nonumber \\
		=\frac{\Gamma\left(kn-\frac{n}{2}+1\right)}{\sqrt{\pi}~\Gamma\left(kn-\frac{n}{2}+\frac{1}{2}\right)}\int_{-1}^{1} {\bm b}_{k,n}(xy,t)~{\bm P}_\ell\left(itn|xy|^{\frac{1}{n}}\right)(1-t^2)^{kn-\frac{n}{2}-\frac{1}{2}}e^{itn|xy|^{\frac{1}{n}}}dt.
	\end{eqnarray}

Define
	$$
	{\bm M}_\ell\left(z\right)=\left(1+\frac{n!\Gamma(2\lambda)}{\Gamma(2\lambda+n)}~|\lambda| {\bf B}(\lambda) n^{2\lambda-1}\right){\bm P}_\ell\left(z\right),~~z\in \mathbb{R},
	$$
	for some constant ${\bf B}(\lambda)>0$ depending on $\displaystyle \lambda=kn-\frac{n}{2}$.
	
	Standard estimates~(cf.~\cite[Lemma 4.9]{BKO}) provide
	\begin{eqnarray*}
\sup_{t\in [-1,1]}	\left|{\bm P}_\ell\left(itn|xy|^{\frac{1}{n}}\right)\right| \leq {\bm P}_\ell\left(n|xy|^{\frac{1}{n}}\right)
	\end{eqnarray*}
	and
			\begin{eqnarray*}
		\sup_{t\in [-1,1]}\left|\frac{1}{\lambda}C_n^\lambda(t)\right|\leq {\bf B}(\lambda)n^{2\lambda -1}, &\forall n\in \N.
	\end{eqnarray*}
	
These bounds yield \eqref{Bcondition-ell}, completing the proof of Lemma~\ref{Bkn-xdxLemma}.
\end{proof}


Building on Lemma~\ref{Bkn-xdxLemma}, the next proposition provides mixed estimates for iterates of the operators $\left|~x~\right|^{2-\frac{2}{n}}\Delta_k$ and $\displaystyle x\,\frac{d}{dx}$ applied to $B_{k,n}(x,y)$, essential for controlling derivatives of convolutions and for the seminorm estimates in Section~\ref{knSchwartz-section} and~Section~\ref{Lp-section}.

\begin{proposition}\label{Bkn-xdxProposition}
For every $\alpha,\ell\in \N_0$, there exists a polynomial of degree $\ell$, ${\bm N}_\ell$, with positive coefficients such that
	\begin{eqnarray}
			\label{Bcondition-ellDeltak}
		\left| \left(n\,|x|^{2-\frac{2}{n}}\Delta_k\right)^\alpha \left(nx\frac{d}{dx}\right)^\ell B_{k,n}(x,y)\right| \leq n^\alpha |y|^{\frac{2\alpha}{n}} {\bm N}_\ell\left(n|xy|^\frac{1}{n}\right)&,~x,y\in \R. 
	\end{eqnarray}
\end{proposition}

\begin{proof}
	The $\mathfrak{sl}(2,\mathbb{R})$ relation \(\left[\mathbb{H}_{k,\frac{2}{n}}, \mathbb{E}_{k,\frac{2}{n}}^-\right] = -2 \mathbb{E}_{k,\frac{2}{n}}^-\) (see \eqref{sl2}) implies
	$$
	\left[n x \frac{d}{dx}, \, n |x|^{2 - \frac{2}{n}} \Delta_k\right] = -2 n |x|^{2 - \frac{2}{n}} \Delta_k.
	$$
	
The graded Leibniz rule for commutators then yields
	$$
	\left[n x \frac{d}{dx}, \left(n |x|^{2 - \frac{2}{n}} \Delta_k\right)^\alpha\right] = -2\alpha \left(n |x|^{2 - \frac{2}{n}} \Delta_k\right)^\alpha.
	$$
	
	Rearranging yields the intertwining form
	$$
	\left(n |x|^{2 - \frac{2}{n}} \Delta_k\right)^\alpha \biggl( n x \frac{d}{dx} \biggr)
	= \biggl( n x \frac{d}{dx} + 2\alpha I \biggr) \left(n |x|^{2 - \frac{2}{n}} \Delta_k\right)^\alpha.
	$$
	
	By induction on $\ell \in \mathbb{N}_0$, this extends to
	$$
	\left(n |x|^{2 - \frac{2}{n}} \Delta_k\right)^\alpha \biggl( n x \frac{d}{dx} \biggr)^\ell
	= \biggl( n x \frac{d}{dx} + 2\alpha I \biggr)^\ell \left(n |x|^{2 - \frac{2}{n}} \Delta_k\right)^\alpha.
	$$
	
	From \eqref{JointEigenfunction} it follows that
	$$
	\left(n |x|^{2 - \frac{2}{n}} \Delta_k\right)^\alpha B_{k,n}(x,y) = (-n)^\alpha |y|^{\frac{2\alpha}{n}} B_{k,n}(x,y).
	$$
	
	Thus,
	\begin{align*}
		\biggl( n |x|^{2 - \frac{2}{n}} \Delta_k \biggr)^\alpha \biggl( n x \frac{d}{dx} \biggr)^\ell B_{k,n}(x,y)
		&= (-n)^\alpha |y|^{\frac{2\alpha}{n}} \biggl( n x \frac{d}{dx} + 2\alpha I \biggr)^\ell B_{k,n}(x,y) \\
		&= (-n)^\alpha |y|^{\frac{2\alpha}{n}} \sum_{j=0}^\ell \binom{\ell}{j} (2\alpha)^{\ell-j} \biggl( n x \frac{d}{dx} \biggr)^j B_{k,n}(x,y).
	\end{align*}
	
	By the triangle inequality and Lemma~\ref{Bkn-xdxLemma},
	$$
	\biggl| \biggl( n |x|^{2 - \frac{2}{n}} \Delta_k \biggr)^\alpha \biggl( n x \frac{d}{dx} \biggr)^\ell B_{k,n}(x,y) \biggr|
	\leq n^\alpha |y|^{\frac{2\alpha}{n}} \sum_{j=0}^\ell \binom{\ell}{j} (2\alpha)^{\ell-j} {\bm M}_j \bigl( n |xy|^{\frac{1}{n}} \bigr).
	$$
	
	Thus, the inequality~\eqref{Bcondition-ellDeltak} holds with the polynomial
$$
{\bm N}_\ell(z) = \sum_{j=0}^\ell \binom{\ell}{j} (2\alpha)^{\ell-j}
{\bm M}_j\!\left(z\right).
$$
\end{proof}

\subsection{Translation and convolution operators for $\mathcal{F}_{k,n}$}\label{Convolution-section}
According to \cite[Theorem 4.2]{BNS} and under the condition
\(
kn - \frac{n}{2} > -\frac{1}{2},
\)~
the product formula for two \(\kn\)-generalized Dunkl kernels can be written as
	\begin{equation}\label{kernel}
		B_{k,n}(x,w)B_{k,n}(y,w) = \int_\mathbb{R} B_{k,n}(z,w) \,d\nu_{x,y}^{k,n}(z),
	\end{equation}
	where the measure \(d\nu_{x,y}^{k,n}(z)\) is defined by
	\begin{equation}\label{Kkernel} d\nu_{x,y}^{k,n}(z)=\begin{cases}
			\mathcal{K}_{k,n}(x,y,z)d\mu_{k,n}(z) &,~\textrm{if}~xy\neq 0 \\
			d\delta_x(z) &,~\textrm{if}~y=0 \\
			d\delta_y(z) &,~\textrm{if}~x=0.
		\end{cases}
	\end{equation} 
	
	Here, \(\mathcal{K}_{k,n}(x,y,z)\) denotes a non-positive kernel whose support is contained in the set
	$$I_{x,y}=\left\{z\in \mathbb{R}~:~\; \left||x|^\frac{1}{n}-|y|^\frac{1}{n}\right|<|z|^\frac{1}{n}<|x|^\frac{1}{n}+|y|^\frac{1}{n}\right\}.$$
	
In a natural way, the generalized translation operator $\tau^{k,n}_{x}$ is then introduced by its integral representation for suitable functions:
	\begin{equation}\label{translation}(\tau^{k,n}_{x}f)(y)=\int_{\mathbb{R}}f(z)\,d\nu_{x,y}^{k,n}(z),\quad x\in\mathbb{R}.
	\end{equation}
	
For each $\:x,\:y,\:z\in \mathbb{R}$, we immediately have
	\begin{eqnarray*}
		(\tau^{k,n}_{x}B_{k,n})(y,z)=B_{k,n}(x,z)\,B_{k,n}(y,z), 
	\end{eqnarray*}
	and
	\begin{equation*}
		\mathcal{F}_{k,n}(\tau^{k,n}_{x}f)(y)= B_{k,n}((-1)^{n} x,y)\mathcal{F}_{k,n}f(y).\end{equation*}
		
		The definition of the generalized translation operator \(\tau^{k,n}_{x}\), as presented in \eqref{translation}, extends naturally to the space \(L^p(d\mu_{k,n})\). In particular, for every \(x \in \mathbb{R}\) and \(1 \leqslant p \leqslant \infty\), the following bound holds:
			\begin{equation}\label{normtrans}
			\|\tau^{k,n}_{x} f\|_{L^{p}(d\mu_{k,n})}\leqslant A_{k,n}\| f\|_{L^{p}(d\mu_{k,n})}, 
		\end{equation}
		where the constant \(A_{k,n}\) satisfies \(0 < A_{k,n} < \infty\) and depends solely on the parameters \(k\) and \(n\).
		
The \(\kn\)-generalized convolution product of two appropriate functions \(f\) and \(g\) is defined via the generalized translation operator as
\begin{eqnarray}\label{convolutionIntegral}
	(f\star_{k,n} g)(x)=\int_{\mathbb{R}} f(y) \, \left(\tau_{x}^{k,n} g\right)\Big((–1)^n y\Big)\, d\mu_{k,n}(y), \quad x\in \mathbb{R}.
\end{eqnarray}

{This convolution operation seamlessly integrates with the \(\kn-\)generalized Fourier transform defined in \eqref{FkaRN2}. It}
may be rewritten as
\begin{equation}\label{ConvolutionFormula}
	\left(f\star_{k,n} g\right)(x)=\int_{\mathbb{R}} \big(\mathcal{F}_{k,n} f\big)(y) \, \big(\mathcal{F}_{k,n} g\big)(y) \, B_{k,n}\Big((–1)^n x,y\Big) \, d\mu_{k,n}(y), \quad x\in \mathbb{R}.
\end{equation}

The convolution structure satisfies Young's inequality. 
 Specifically, for functions \(f\in L^{p}(d\mu_{k,n})\) and \(g\in L^{r}(d\mu_{k,n})\) with \(1\leq p,q,r\leq \infty\) and satisfying
\(\displaystyle
\frac{1}{p}+\frac{1}{r}=\frac{1}{q}+1,
\)
the following estimate holds:
\begin{eqnarray}\label{YoungIneq}
	\|f\star_{k,n} g\|_{L^{q}(d\mu_{k,n})} \leq A_{k,n}\,\|f\|_{L^{p}(d\mu_{k,n})} \, \|g\|_{L^{r}(d\mu_{k,n})},
\end{eqnarray}
where \(A_{k,n}\) is a positive constant depending only on \(k\) and \(n\) (see \cite[Properties 5.6]{BNS}).

In particular, if \(1\leq p,q,r\leq 2\) are such that
\(\displaystyle 
\frac{1}{p}+\frac{1}{r}=\frac{1}{q}+1,
\)
and if \(f\in L^p(\mu_{k,n})\) and \(g\in L^r(\mu_{k,n})\), then the generalized Fourier transform obeys

\begin{equation}\label{FourierknConvolution}
	\mathcal{F}_{k,n}\big(f\star_{k,n} g\big) = \big(\mathcal{F}_{k,n} f\big) \cdot \big(\mathcal{F}_{k,n} g\big).
\end{equation}

\section{The spaces $\mathcal{S}_{k,n}(\R)$ and $\mathcal{D}_{k,n}(\R)$}
\label{knSchwartz-section}

Before proving Theorem \ref{Fkn-Schwartz-Theorem} and Theorem \ref{Dkn-thm}, we first refine the description of $\mathcal{S}_{k,n}(\mathbb{R})$ from Definition~\ref{knSchwartz}, by rewriting the seminorm condition \eqref{sup-kn} in terms of the generators of $\mathfrak{sl}(2,\mathbb{R})$ given by \eqref{LadderOp}.

  The technical lemmas in Subsection \ref{knSchwartz-recursive}  normal-ordering formulas needed to control these seminorms and are used systematically in Subsections \ref{ProofTheorem1} and \ref{ProofTheorem2}.
  
\subsection{Technical results}\label{knSchwartz-recursive}

The next lemma states a key recursive formula involving the operators $\displaystyle n\left|~x~\right|^{2-\frac{2}{n}}\Delta_k$ and $n\left|~x~\right|^{\frac{2}{n}}$, which underlie the normal-ordering formulas used later to rewrite the defining seminorms of $\mathcal{S}_{k,n}(\mathbb{R})$ in terms of the generator $\displaystyle \mathbb{H}_{k,\frac{2}{n}}$ of $\mathfrak{sl}(2,\R)$ (see \eqref{LadderOp}).
\begin{lemma}\label{RecursiveLemma} For every $f\in C^\infty(\R\setminus \{0\})$ and $\beta \in \N_0$, one has
	\begin{align}\label{Recursive1}
		\begin{array}{lll}
	n\left|~x~\right|^{2-\frac{2}{n}}\Delta_k\left( \left(n\left|~x~\right|^{\frac{2}{n}}\right)^\beta f(x)\right)=\\ =4\beta\left(n\left|~x~\right|^{\frac{2}{n}}\right)^{\beta-1}\left((\beta-1)f(x)+\mathbb{H}_{k,\frac{2}{n}}f(x)\right)  \\ +\left(n\left|~x~\right|^{\frac{2}{n}}\right)^\beta\left(n\left|~x~\right|^{2-\frac{2}{n}}\Delta_k f(x)\right),~x\in \R \setminus \{0\}.
		\end{array}
	\end{align}
\end{lemma}

\begin{proof}
Recall that $\mathbb{E}_{k,\frac{2}{n}}^+,\mathbb{E}_{k,\frac{2}{n}}^-$ and $\mathbb{H}_{k,\frac{2}{n}}$, defined in \eqref{LadderOp}, satisfy the commutation relations \eqref{sl2}.

\underline{Step 1 -- action of $\mathbb{H}_{k,\frac{2}{n}}$ on $\left(n\left|~x~\right|^{\frac{2}{n}}\right)^{\beta}$:} 

Since \(\left[\mathbb{H}_{k,\frac{2}{n}},\mathbb{E}_{k,\frac{2}{n}}^+\right]=2\mathbb{E}_{k,\frac{2}{n}}^+\) is equivalent to
\(\mathbb{H}_{k,\frac{2}{n}}\mathbb{E}_{k,\frac{2}{n}}^+ = \mathbb{E}_{k,\frac{2}{n}}^+\left(2I+\mathbb{H}_{k,\frac{2}{n}}\right)\), we obtain
	\begin{eqnarray*}
		\mathbb{H}_{k,\frac{2}{n}}\left(n\left|~x~\right|^{\frac{2}{n}}f(x)\right)=n\left|~x~\right|^{\frac{2}{n}}\left(2f(x)+\mathbb{H}_{k,\frac{2}{n}}f(x)\right), & x\in \R \setminus \{0\}.
	\end{eqnarray*}
	
An induction on $j\in \N_0$ then yields
	\begin{eqnarray}
		\label{intertwiningH_beta}
		\mathbb{H}_{k,\frac{2}{n}}\left(\left(n\left|~x~\right|^{\frac{2}{n}}\right)^j f(x)\right)=\left(n\left|~x~\right|^{\frac{2}{n}}\right)^j\left((2j) f(x)+\mathbb{H}_{k,\frac{2}{n}}f(x)\right), &x\in \R \setminus \{0\}.
	\end{eqnarray}
	
	\underline{Step 2 -- commutator $\left[~n\left|~x~\right|^{2-\frac{2}{n}}\Delta_k,\left(n\left|~x~\right|^{\frac{2}{n}}\right)^\beta\right]$:} 
	
Next, from \(\left[\mathbb{E}_{k,\frac{2}{n}}^+,\mathbb{E}_{k,\frac{2}{n}}^-\right]=\mathbb{H}_{k,\frac{2}{n}}\) we deduce
	\begin{eqnarray}
		\label{commutatorH_E}\left[~n\left|~x~\right|^{2-\frac{2}{n}}\Delta_k,n\left|~x~\right|^{\frac{2}{n}}I~\right]=\left[-2i\mathbb{E}_{k,\frac{2}{n}}^-,-2i\mathbb{E}_{k,\frac{2}{n}}^+\right]=4\mathbb{H}_{k,\frac{2}{n}}.
	\end{eqnarray}
	

Then, for each $\beta \in \N$ the Lie bracket identity
\begin{eqnarray*}
[A,B^\beta]=\sum_{j=0}^{\beta-1} B^{\beta-1-j}[A,B]B^{j}
\end{eqnarray*}
applied to $\displaystyle A=n\left|~x~\right|^{2-\frac{2}{n}}\Delta_k$ and $\displaystyle B=n\left|~x~\right|^{\frac{2}{n}}$, combined with \eqref{commutatorH_E} and \eqref{intertwiningH_beta} gives
	\begin{align*}
		\left[~n\left|~x~\right|^{2-\frac{2}{n}}\Delta_k,\left(n\left|~x~\right|^{\frac{2}{n}}\right)^\beta I~\right]f(x)=\nonumber \\
		=4\beta\left(n\left|~x~\right|^{\frac{2}{n}}\right)^{\beta-1}\left((\beta-1)f(x)+\mathbb{H}_{k,\frac{2}{n}}f(x)\right),~~~x\in \R.
	\end{align*}
	
By the definition of the commutator,
	\begin{eqnarray*}
		\left[~n\left|~x~\right|^{2-\frac{2}{n}}\Delta_k,\left(n\left|~x~\right|^{\frac{2}{n}}\right)^\beta I~\right]f(x)= \\
		=~n\left|~x~\right|^{2-\frac{2}{n}}\Delta_k\left( \left(n\left|~x~\right|^{\frac{2}{n}}\right)^\beta f(x)\right)-\left(n\left|~x~\right|^{\frac{2}{n}}\right)^\beta\left(n\left|~x~\right|^{2-\frac{2}{n}}\Delta_k f(x)\right),~~~x\in \R\setminus \{0\},
	\end{eqnarray*} which is exactly \eqref{Recursive1}, completing the proof of Lemma \ref{RecursiveLemma}.
\end{proof}

The next lemma introduces a family of seminorms adapted to $\mathcal{S}_{k,n}(\mathbb{R})$ and connects them to powers of $\mathbb{H}_{k,\frac{2}{n}}$, using Stirling numbers of the second kind and Katriel’s boson normal ordering approach~\cite{Katriel74}. This result provides an $\mathfrak{sl}(2,\R)-$based characterization of the topology on $\mathcal{S}_{k,n}(\mathbb{R})$, which will be crucial in the proof of Theorem~\ref{Fkn-Schwartz-Theorem}.

\begin{lemma}\label{Schwartz-Lemma}
Let $\left(P_{\alpha,\beta}\right)_{\alpha,\beta\in \N_0}$ be the family of seminorms
\begin{eqnarray}
	\label{seminormP}
	P_{\alpha,\beta}(g)=\sup_{x\in \mathbb{R}\setminus\{0\}} \Bigl| \Bigl( n|~x~|^{\frac{2}{n}} \Bigr)^\alpha \Bigl( n|~x~|^{2-\frac{2}{n}} \Delta_k \Bigr)^\beta g(x) \Bigr|,
\end{eqnarray}
and let $(f_m)_{m\in \N_0}$ be the sequence of functions given by \begin{eqnarray}
		\label{fm}f_m(x)
		=\sum_{\ell=0}^m \sum_{j=0}^{\ell} \binom{m}{\ell}\left(kn+1-\frac{n}{2}\right)^{m-\ell} n^\ell S(\ell,j)~ x^jf^{(j)}(x), & x\in \R,
	\end{eqnarray}
	where $S(\ell,j)$ denotes the Stirling numbers of the second kind (cf~\cite[\S~ 24.1.4]{AS86}).
	
The following assertions are then true:
	\begin{enumerate}
		\item[{\bf (a)}] $f\in \mathcal{S}_{k,n}(\mathbb{R})$ if and only if the seminorm condition
			\begin{eqnarray}
			\label{sup-fm}
			P_{\alpha,\beta}(f_m)<\infty, &\mbox{for all}& \alpha,\beta,m \in \N_0.	
			\end{eqnarray}
			is always satisfied.
		\item[{\bf (b)}] For every $m \in \mathbb{N}_0$, we have the mapping property
		\begin{eqnarray}
			\label{fm-Mapping}	\left(\mathbb{H}_{k,\frac{2}{n}}\right)^m:f\mapsto f_{m}.
		\end{eqnarray}
	\end{enumerate}
\end{lemma}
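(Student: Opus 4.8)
The plan is to prove part {\bf (b)} first, since it supplies the operator identity $\left(\mathbb{H}_{k,\frac{2}{n}}\right)^m f=f_m$, and then to deduce part {\bf (a)} from the triangular, hence invertible, structure of the map $f\mapsto (f_m)_{m\in\N_0}$ combined with the subadditivity of the seminorms $P_{\alpha,\beta}$ in \eqref{seminormP}.

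For {\bf (b)} I would write $\mathbb{H}_{k,\frac{2}{n}}=n\theta+cI$, where $\theta=x\frac{d}{dx}$ is the Euler operator and $c=kn+1-\frac{n}{2}$. Since $\theta$ commutes with $I$, the binomial theorem yields
\begin{eqnarray*}
\left(\mathbb{H}_{k,\frac{2}{n}}\right)^m=\sum_{\ell=0}^m\binom{m}{\ell}c^{\,m-\ell}n^\ell\,\theta^\ell.
\end{eqnarray*}
The combinatorial heart is the boson normal-ordering identity of Katriel \cite{Katriel74}, which in this realization (with $a^\dagger\leftrightarrow x$ and $a\leftrightarrow\frac{d}{dx}$, so that $a^\dagger a\leftrightarrow\theta$) reads
\begin{eqnarray*}
\theta^\ell=\left(x\frac{d}{dx}\right)^\ell=\sum_{j=0}^\ell S(\ell,j)\,x^j\frac{d^j}{dx^j}.
\end{eqnarray*}
I would justify this by induction on $\ell$, using $\theta\!\left(x^j f^{(j)}\right)=j\,x^jf^{(j)}+x^{j+1}f^{(j+1)}$ together with the Stirling recurrence $S(\ell+1,j)=j\,S(\ell,j)+S(\ell,j-1)$, all of which is valid on $\R\setminus\{0\}$. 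Substituting the identity into the binomial expansion and applying the result to $f$ reproduces \eqref{fm} term by term, which establishes {\bf (b)}.

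For {\bf (a)} I would first note that $P_{\alpha,\beta}(g)=n^{\alpha+\beta}\sup_{x\neq0}\bigl|(|x|^{\frac{2}{n}})^\alpha(|x|^{2-\frac{2}{n}}\Delta_k)^\beta g(x)\bigr|$, so finiteness of $P_{\alpha,\beta}$ is equivalent to finiteness of the raw seminorms appearing in Definition~\ref{knSchwartz}. By the explicit formula \eqref{fm}, each $f_m$ is a \emph{finite} linear combination of the functions $g_j(x):=x^jf^{(j)}(x)$ with $0\le j\le m$. The forward implication is then immediate: if $f\in\mathcal{S}_{k,n}(\R)$ then $P_{\alpha,\beta}(g_j)<\infty$ for every $j$, and subadditivity and homogeneity of the seminorm give $P_{\alpha,\beta}(f_m)<\infty$.

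The converse is where the main obstacle lies, and it amounts to inverting the transform $(g_j)\mapsto(f_m)$. The structural fact I would exploit is that $f_m=\sum_{j=0}^m a_{m,j}g_j$ with diagonal coefficient $a_{m,m}=n^m\neq0$ (the only contributing term has $\ell=j=m$, and $S(m,m)=1$), so the coefficient matrix is lower-triangular with nonvanishing diagonal and is therefore invertible on every finite index range. By back-substitution each $g_\ell$ is a finite linear combination of $f_0,\dots,f_\ell$; concretely the inverse is realized using the Stirling numbers of the \emph{first} kind $s(\ell,j)$, which invert the second-kind matrix via $g_\ell=x^\ell\frac{d^\ell}{dx^\ell}f=\sum_{j=0}^\ell s(\ell,j)\theta^j f$, followed by $\theta=\frac{1}{n}\bigl(\mathbb{H}_{k,\frac{2}{n}}-cI\bigr)$ and {\bf (b)} to rewrite each $\theta^j f$ in terms of $f_0,\dots,f_j$. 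Applying the subadditivity of $P_{\alpha,\beta}$ once more then transfers the finiteness back from the $f_m$ to the $g_\ell$, that is, to $f\in\mathcal{S}_{k,n}(\R)$. The only delicate point is the bookkeeping required to compose the first-kind/second-kind Stirling inversion with the binomial inversion of the shift by $c$ -- precisely the interplay between the two kinds of Stirling numbers highlighted in the statement.
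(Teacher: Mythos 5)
Your proposal is correct and follows essentially the same route as the paper: part \textbf{(b)} via the binomial expansion of $\mathbb{H}_{k,\frac{2}{n}}=n\,x\frac{d}{dx}+\bigl(kn+1-\frac{n}{2}\bigr)I$ combined with Katriel's normal-ordering identity $\bigl(x\frac{d}{dx}\bigr)^{\ell}=\sum_{j}S(\ell,j)X^{j}D^{j}$, and part \textbf{(a)} via the triangle inequality in one direction and the lower-triangular (diagonal $n^{m}\neq 0$) structure of $f\mapsto f_m$ in the other. If anything, your back-substitution argument for the converse of \textbf{(a)} is more explicit than the paper's, which condenses that step into the single asserted inequality $P_{\alpha,\beta}(X^{m}D^{m}f)\leq P_{\alpha,\beta}(f_m)$.
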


\begin{proof}
By Definition~\ref{knSchwartz} and \eqref{seminormP} it easily follows that a function \(f\) belongs to \(\mathcal{S}_{k,n}(\mathbb{R})\) if and only if
	\begin{eqnarray}
	\label{supXjDj}	P_{\alpha,\beta}(X^j D^j f) < \infty, & \text{for all}& \alpha,\beta,j \in \N_0,
	\end{eqnarray}
where 
\begin{eqnarray}
	\label{X-D}
	X = xI \quad\text{and}\quad D = \frac{d}{dx}.
\end{eqnarray}

\underline{Proof of \textbf{(a)}:}

From \eqref{fm} the inequality
\begin{eqnarray}
	\label{IneqP1}
P_{\alpha,\beta}(f_m) \leq \sum_{\ell=0}^{m} \sum_{j=0}^{\ell} \binom{m}{\ell}  \Bigl(kn + 1 - \frac{n}{2}\Bigr)^{m-\ell} n^{\ell}S(\ell,j)\, P_{\alpha,\beta}(X^j D^j f) 
\end{eqnarray} 
follows immediately from the triangle inequality.

Conversely, $n^m X^m D^m f(x)$ equals to the $(m,m)-$coefficient of the summand in~\eqref{fm}, since 
\[
S(m,m)=1 \quad \text{and} \quad \binom{m}{m}=1.
\]

Furthermore, since all the coefficients in \eqref{fm} are positive, the inequality 
\begin{eqnarray}
	\label{IneqP2}P_{\alpha,\beta}(X^m D^m f) \leq n^{-m} P_{\alpha,\beta}(f_m)
\end{eqnarray}
follows straightforwardly.

Thus, \eqref{IneqP1} and \eqref{IneqP2} show that $P_{\alpha,\beta}(X^m D^m f)$ and $P_{\alpha,\beta}(f_m)$ define equivalent seminorms. This proves condition \eqref{sup-fm}.

\underline{Proof of {\bf (b)}:} 

We begin by observing that the ladder operators $X$ and $D$ defined by \eqref{X-D} are canonical generators of the boson algebra -- i.e, the Weyl-Heisenberg algebra of dimension $3$ -- so that the {\it boson normal ordering approach} can be used to formally represent the iterated powers $\displaystyle \left(x\frac{d}{dx}\right)^\ell$, $\ell \in \N$, of $\displaystyle x\frac{d}{dx}=XD$ as follows~(cf.~\cite{Katriel74}): 
\begin{eqnarray}
	\label{NormalOrdering}
	\left(x\frac{d}{dx}\right)^\ell f(x)=\sum_{j=1}^{\ell} S(\ell,j)~ X^jD^jf(x), & x\in \R.
\end{eqnarray}

So, using the fact that $S(\ell,\ell)=1$, $S(\ell,j)=0$ for $\ell<j$, and $S(\ell,0)$ for $\ell\geq 1$ (see~\cite[\S~24.1.4]{AS86}), we find that
\eqref{fm} admits the binomial representation
\begin{eqnarray}
	\label{fm-binomial}
	f_m(x)=\sum_{\ell=0}^m \binom{m}{\ell}\left(kn+1-\frac{n}{2}\right)^{m-\ell} n^\ell \left(x\frac{d}{dx}\right)^\ell f(x),& x\in \R.
\end{eqnarray}

Since the two summands of
\[
\mathbb{H}_{k,\frac{2}{n}}
= n x\frac{d}{dx}+\Bigl(kn+1-\frac{n}{2}\Bigr)I
\]
 commute, the right-hand side of \eqref{fm-binomial} is exactly the binomial expansion
 of $\left(\mathbb{H}_{k,\frac{2}{n}}\right)^{\,m}f$. This proves \eqref{fm-Mapping}, as desired.
\end{proof}

We now relate Lemma~\ref{RecursiveLemma} and Lemma~\ref{Schwartz-Lemma}. The result is the following proposition:

\begin{proposition}\label{Schwartz-Proposition}
For each $\beta\in \N_0$, let $\left(\widetilde{f}_{\beta,\ell}\right)_{\ell \leq \beta}$ be the sequence of functions defined by
	\begin{eqnarray}
		\label{fl-Stirling}
		\widetilde{f}_{\beta,\ell}(x)=(-1)^\ell \sum_{j=0}^\ell \sum_{m=0}^j s(\ell,j)\binom{j}{m}\beta^{j-m}f_m(x) &,~x\in \R,
	\end{eqnarray}
	where $(f_m)_{m\in \N_0}$ denotes the sequence of functions defined by~\eqref{fm}, and $s(\ell,j)$ denotes the signed Stirling numbers of the first kind (cf.~\cite[subsection 24.1.3]{AS86}).

Then we have the following:
	\begin{enumerate}
		\item[{\bf (A)}] $f\in \mathcal{S}_{k,n}(\R)$ if and only if 
		\begin{eqnarray}
			\label{sup-flbeta}
		P_{\alpha,\beta}\left(\widetilde{f}_{\beta,\ell}\right)<\infty, &\mbox{for all}& \alpha,\beta,\ell\in \N_0,~~\mbox{such that}~~\ell\leq \beta,	\end{eqnarray}\label{fl-Schwartzkn}
		where $P_{\alpha,\beta}$ defines the seminorm defined by \eqref{seminormP}.
		\item[{\bf (B)}] $\left(\widetilde{f}_{\beta, \ell}\right)_{\ell\leq \beta}$ satisfies
		\begin{eqnarray}
			\label{fl-Recursive}	\widetilde{f}_{\beta, \ell}=\begin{cases}
				f&,~\ell=0 \\
				\left((\beta-\ell) I+\mathbb{H}_{k,\frac{2}{n}}\right)\widetilde{f}_{\beta,\ell-1}&,~\ell\geq 1.
			\end{cases} 
		\end{eqnarray}
	\end{enumerate}
\end{proposition}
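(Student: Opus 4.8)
The plan is to establish (B) first and then deduce (A) from it. The single most useful input is the identity $f_m=\left(\mathbb{H}_{k,\frac{2}{n}}\right)^m f$ furnished by Lemma~\ref{Schwartz-Lemma}(b), together with the elementary fact that the scalar operator $\beta I$ commutes with $\mathbb{H}_{k,\frac{2}{n}}$. Writing $\mathbb{H}:=\mathbb{H}_{k,\frac{2}{n}}$ for brevity, I would substitute $f_m=\mathbb{H}^m f$ into the defining formula~\eqref{fl-Stirling} and collapse the inner sum by the binomial theorem, turning the double sum into $\widetilde{f}_{\beta,\ell}=(-1)^\ell\sum_{j=0}^{\ell}s(\ell,j)\,(\beta I+\mathbb{H})^j f$. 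Recognizing the signed Stirling numbers of the first kind as the coefficients of the falling-factorial polynomial, the remaining sum is precisely the operator falling factorial evaluated at $\beta I+\mathbb{H}$, i.e.\ a product of commuting linear factors in $\mathbb{H}$. The first-order recursion~\eqref{fl-Recursive} is then obtained by extracting a single linear factor from this product, the base case $\ell=0$ being immediate since only the term $j=m=0$ survives in~\eqref{fl-Stirling}, giving $\widetilde{f}_{\beta,0}=f_0=f$. Equivalently, one may bypass the closed product form and prove~\eqref{fl-Recursive} directly by induction on $\ell$, feeding in the three-term recurrence satisfied by $s(\ell,j)$; I expect the operator-factorization route to be the cleaner of the two.

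For part (A), the forward implication is essentially formal. If $f\in\mathcal{S}_{k,n}(\R)$, then Lemma~\ref{Schwartz-Lemma}(a) gives $P_{\alpha,\beta}(f_m)<\infty$ for all $\alpha,\beta,m\in\N_0$; since each $\widetilde{f}_{\beta,\ell}$ is by~\eqref{fl-Stirling} a finite linear combination of $f_0,\dots,f_\ell$, the triangle inequality applied to the seminorm~\eqref{seminormP} yields $P_{\alpha,\beta}(\widetilde{f}_{\beta,\ell})<\infty$ for every admissible triple with $\ell\le\beta$. The converse is where the argument has content: for each fixed $\beta$ the assignment $(f_m)_{m\le\beta}\mapsto(\widetilde{f}_{\beta,\ell})_{\ell\le\beta}$ is lower triangular---$\widetilde{f}_{\beta,\ell}$ involves only $f_m$ with $m\le\ell$---with nonzero diagonal entries $(-1)^\ell$, hence invertible. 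Its inverse expresses every $f_m$ (for $m\le\beta$) as a finite linear combination of the $\widetilde{f}_{\beta,\ell}$ with $\ell\le m\le\beta$, the coefficients being governed by the duality between Stirling numbers of the first and second kind. Applying $P_{\alpha,\beta}$ and the triangle inequality transfers finiteness back to the $f_m$, and Lemma~\ref{Schwartz-Lemma}(a) then delivers $f\in\mathcal{S}_{k,n}(\R)$.

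The main obstacle I anticipate is twofold and essentially combinatorial. First, one must verify carefully that the nested Stirling--binomial sum in~\eqref{fl-Stirling} genuinely collapses into the operator falling factorial in $\beta I+\mathbb{H}$; this is exactly the place where commutativity of $\beta I$ and $\mathbb{H}_{k,\frac{2}{n}}$ is indispensable, since without it neither the binomial collapse nor the factorization is legitimate. Second, in the converse of (A) one must ensure that the triangular inversion stays within the admissible index range, so that each $f_m$ with $m\le\beta$ is reconstructed solely from the $\widetilde{f}_{\beta,\ell}$ with $\ell\le\beta$ that are hypothesized to have finite seminorms; letting $\beta$ range over all of $\N_0$ then recovers the full family of conditions required by Lemma~\ref{Schwartz-Lemma}(a). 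Once these two points are secured, both statements follow by routine manipulation.
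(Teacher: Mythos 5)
Your proposal follows essentially the same route as the paper: part (B) rests on $f_m=\bigl(\mathbb{H}_{k,\frac{2}{n}}\bigr)^m f$, the commutativity of $\beta I$ with $\mathbb{H}_{k,\frac{2}{n}}$, the binomial collapse of the inner sum, and the identification of $\sum_{j} s(\ell,j)\lambda^j$ with the falling factorial $(\lambda)_\ell$ (the paper runs the computation from the recursion towards the closed form \eqref{fl-FallingFactorials}, you run it in the reverse direction), while part (A) combines the triangle inequality with Lemma~\ref{Schwartz-Lemma}(a) exactly as in the paper, your lower-triangular inversion being a more explicit justification of the paper's asserted lower bound $P_{\alpha,\beta}(f_\ell)\le P_{\alpha,\beta}\bigl(\widetilde f_{\beta,\ell}\bigr)$. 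One bookkeeping caveat: extracting a single linear factor from $(-1)^\ell\bigl(\beta I+\mathbb{H}_{k,\frac{2}{n}}\bigr)_\ell$ yields $\widetilde f_{\beta,\ell}=-\bigl((\beta-\ell+1)I+\mathbb{H}_{k,\frac{2}{n}}\bigr)\widetilde f_{\beta,\ell-1}$ rather than $\bigl((\beta-\ell)I+\mathbb{H}_{k,\frac{2}{n}}\bigr)\widetilde f_{\beta,\ell-1}$, so your (correct) computation will surface a sign/offset discrepancy that is already present between \eqref{fl-Stirling} and \eqref{fl-Recursive} as printed, and reflects on the statement rather than on your method.
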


\begin{proof}
First, we note that the proof of the statement {\bf (A)} follows from the statement {\bf (a)} of the Lemma \ref{Schwartz-Lemma} and from the set of inequalities
\begin{eqnarray*}
	\label{fl-Ineq}
	P_{\alpha,\beta}\left({f_{\ell}}\right)\leq 	P_{\alpha,\beta}\left(\widetilde{f}_{\beta,\ell}\right)\leq \sum_{j=0}^\ell \sum_{m=0}^j \left|s(\ell,j)\right|\binom{j}{m}\beta^{j-m}~	P_{\alpha,\beta}\left(f_{m}\right),
\end{eqnarray*}
where $P_{\alpha,\beta}$ denotes the seminorm \eqref{seminormP} considered in Lemma \ref{Schwartz-Lemma}.
	
	For the proof of statement {\bf (B)}, let us denote by $(\lambda)_\ell$ ($\ell\in \N_0$) the falling factorials
	\begin{eqnarray}
\label{FallingFactorials}
	(\lambda)_{\ell}=
	\begin{cases}
		1 &,~\ell=0 \\
		\displaystyle \prod_{j=0}^{\ell-1}(\lambda-j)&,~\ell\geq 1. 
	\end{cases}
	\end{eqnarray}
	
	Using induction arguments over $\ell\in \N_0$, we prove that every $\widetilde{f}_{\beta,\ell}$ admits the closed formula
	{\begin{eqnarray}
		\label{fl-FallingFactorials}
		{\widetilde{f}_{\beta,\ell}(x)=	(-1)^\ell\left(-\beta I-\mathbb{H}_{k,\frac{2}{n}}\right)_{\ell}f(x), }& x\in \R,
	\end{eqnarray}}
		where $\left(-\beta I-\mathbb{H}_{k,\frac{2}{n}}\right)_{\ell}$ denotes the operator obtained by formally substituting  $\lambda\mapsto -\beta I-\mathbb{H}_{k,\frac{2}{n}}$ on \eqref{FallingFactorials}.

Then, the equivalence between \eqref{fl-Stirling} and~\eqref{fl-Recursive} follows from the identity
	$$ (\lambda)_\ell=\sum_{j=0}^\ell s(\ell,j)\lambda^j,$$
where $s(\ell,j)$ denotes the signed Stirling numbers of the first kind (cf.~\cite[\S~24.1.3]{AS86}).
\end{proof}

We now have the key ingredients needed to prove Theorem \ref{Fkn-Schwartz-Theorem} in the next subsection.

\subsection{Proof of Theorem \ref{Fkn-Schwartz-Theorem}}\label{ProofTheorem1}

Combining the intertwining properties \eqref{knFourierIntertwining} of the $\kn-$generalized Fourier transform with Lemma \ref{RecursiveLemma}, Lemma \ref{Schwartz-Lemma} and Proposition~\ref{Schwartz-Proposition} yields the following proof for Theorem \ref{Fkn-Schwartz-Theorem}:
\begin{proof}
	We begin by recalling the intertwining properties 	\eqref{knFourierIntertwining}  (see also~\cite[Theorem 5.6]{BKO}).
	Consequently, it follows immediately that
	\begin{eqnarray*}
		\left(n\left|~y~\right|^{\frac{2}{n}}~\right)^{\alpha} \left(n\left|~y~\right|^{2-\frac{2}{n}}\Delta_{k}~\right)^\beta (\mathcal{F}_{k,n}f)(y)=(-1)^{\alpha+\beta}\int_\R  h_{\alpha,\beta}(x)~B_{k,n}(x,y) d\mu_{k ,n}(x),
	\end{eqnarray*}
	holds for all $\alpha,\beta\in \N_0$, where
	$\left(h_{\alpha,\beta}\right)_{\alpha,\beta\in \N_0}$ denotes the sequence of functions defined by
	\begin{eqnarray*}
		h_{\alpha,\beta}(x)&=&\left(~n\left|~x~\right|^{2-\frac{2}{n}}\Delta_{k}~\right)^\alpha\left(~n\left|~x~\right|^{\frac{2}{n}}~\right)^{\beta} f(x),~~~x\in\R\setminus \{0\}.
	\end{eqnarray*}
	
	Thus, by the {\it uniform boundedness} of the kernel $B_{k,n}(x,y)$, provided by \eqref{Bcondition}, together with the inversion formula \eqref{FourierInversionIntegral} and the intertwining property $\mathcal{F}_{k,n}\circ \mathbb{H}_{k,\frac{2}{n}}=-\mathbb{H}_{k,\frac{2}{n}}\circ \mathcal{F}_{k,n}$ (which also follows from \eqref{knFourierIntertwining}), it suffices to verify that 
	\begin{eqnarray}
		\label{L1-integral}
		\int_\R \left|~h_{\alpha,\beta}(x)~\right| d\mu_{k ,n}(x)<\infty,
	\end{eqnarray}
	holds for each $f\in \mathcal{S}_{k,n}(\R)$.

	We first observe that the recursive relation
	\begin{eqnarray*}
		h_{\alpha,\beta}(x)=\left(~n\left|~x~\right|^{2-\frac{2}{n}}\Delta_{k}~\right)^{\alpha-\beta}h_{\beta,\beta}(x)&,~x\in \R\setminus \{0\},
	\end{eqnarray*}
	which holds for $\alpha>\beta$, allows us to restrict ourselves to the case $\alpha \leq \beta$.
	
For $\alpha=0$, the proof is immediate by the definition. Otherwise, by combining Lemma \ref{RecursiveLemma} and Lemma~\ref{Schwartz-Lemma}, one may show that $h_{1,\beta}\in \mathcal{S}_{k,n}(\R)$. Indeed, since $\mathbb{H}_{k,\frac{2}{n}}f \in \mathcal{S}_{k,n}(\R)$ and
	\begin{eqnarray*}
		h_{1,\beta}(x)=4\beta\left(~n\left|~x~\right|^{\frac{2}{n}}~\right)^{\beta-1}\left((\beta-1) f(x)+\mathbb{H}_{k,\frac{2}{n}}f(x)\right)~\\+\left(~n\left|~x~\right|^{\frac{2}{n}}~\right)^{\beta}\left(~n\left|~x~\right|^{2-\frac{2}{n}}\Delta_{k}~\right)f(x), &x\in \R,
	\end{eqnarray*}
	the desired result is obtained by applying the triangle inequality.
	
	For a general $\alpha\in \N_0$,~an inductive argument based on Lemma \ref{RecursiveLemma} and Lemma~\ref{Schwartz-Lemma} shows that there exist constants $\lambda_{\beta,\ell}\in \R$ ($0\leq \ell \leq \alpha \leq \beta$) such that 
	\begin{eqnarray}
		\label{h_alpha-beta}h_{\alpha,\beta}(x)=
		\displaystyle \sum_{\ell=0}^{\alpha}\lambda_{\beta,\ell}~\left(~n\left|~x~\right|^{\frac{2}{n}}~\right)^{\beta-\alpha-\ell}\left(~n\left|~x~\right|^{2-\frac{2}{n}}\Delta_{k}~\right)^\ell \widetilde{f}_{\beta,\ell}(x)&,~x\in \R.
	\end{eqnarray}
	
	Here, the sequence $\left(\widetilde{f}_{\beta,\ell}\right)_{\ell \leq \beta}$ is defined as in Proposition \ref{Schwartz-Proposition}.
	The triangle inequality, when applied to the expansion \eqref{h_alpha-beta} directly yields $h_{\alpha,\beta}\in \mathcal{S}_{k,n}(\R)$. 
	Finally, in order to verify the condition \eqref{L1-integral}, we choose $\nu\in \N$ such that the integral 
	$$
	\sigma_{k,n}(\nu):=\int_{\R} \left(1+n\left|~x~\right|^{\frac{2}{n}}\right)^{-\nu}d\mu_{k ,n}(x)
	$$
	converges. Since $h_{\alpha,\beta}\in \mathcal{S}_{k,n}(\R)$, one has
	\begin{eqnarray*}
		\label{supCond}
		\sup_{x\in \R \setminus\{0\}}\left(1+n\left|~x~\right|^{\frac{2}{n}}\right)^{\nu}\left|h_{\alpha,\beta}(x)\right|<\infty,& \mbox{for all}&\alpha,\beta\in \N_0.
	\end{eqnarray*}
	
	Hence,
	\begin{eqnarray*}
		\label{L1-integralg}
		\int_\R \left|~h_{\alpha,\beta}(x)~\right| d\mu_{k ,n}(x) \leq \sigma_{k,n}(\nu) ~\sup_{x\in \R\setminus\{0\}}\left(1+n\left|~x~\right|^{\frac{2}{n}}\right)^{\nu}\left|h_{\alpha,\beta}(x)\right|,
	\end{eqnarray*}
	which verifies the condition \eqref{L1-integral} and completes the proof.
\end{proof}


\subsection{Proof of Theorem \ref{Dkn-thm}}\label{ProofTheorem2}


We now prove the continuity of the embedding \(
\mathcal{D}_{k,n}(\mathbb{R}) \hookrightarrow \mathcal{S}_{k,n}(\mathbb{R})
\):

\begin{proof}



 Let $(f_m)_{m\in \mathbb{N}_0}$ be the sequence of functions defined in Lemma~\ref{Schwartz-Lemma}. Since both operators $\displaystyle x\frac{d}{dx}$ and  $\left|~\cdot~\right|^{2-\frac{2}{n}}\Delta_k$ map {\it functions with bounded support} to {\it functions with bounded support}, if $f\in \mathcal{D}_{k,n}(\R)$ satisfies the support condition $\operatorname{supp}(f)\subseteq [-R,R]$ for some $R>0$, then
	\begin{eqnarray*}
		\operatorname{supp}\left(\left(x\frac{d}{dx}\right)^j f\right)\subseteq [-R,R], & \mbox{holds for every} & j\in \N_0.
	\end{eqnarray*}
	
	An inductive application of the statement \textbf{(b)} in Lemma~\ref{Schwartz-Lemma} then shows that
	\begin{eqnarray*}
		\operatorname{supp}\left(\left(n\left|~\cdot~\right|^{2-\frac{2}{n}}\Delta_k\right)^\beta f_m\right)\subseteq [-R,R], & \mbox{for every} & \beta,m\in \N_0.
	\end{eqnarray*}
	
	So, for each $\alpha,\beta,m\in \N_0$ and $x\in \R$ we have
	$$ \left|~\left(~n\left|~x~\right|^{\frac{2}{n}}~\right)^{\alpha} \left(~n\left|~x~\right|^{2-\frac{2}{n}}\Delta_{k}~\right)^\beta f_m(x)~\right|\leq n^\alpha R^{\frac{2\alpha}{n}} \left| \left(~n\left|~x~\right|^{2-\frac{2}{n}}\Delta_{k}~\right)^\beta f_m(x)~\right|.$$ 
	
Therefore, we conclude that
\begin{eqnarray*}
	P_{\alpha,\beta}(f_m)\leq n^\alpha R^{\frac{2\alpha}{n}}  P_{0,\beta}(f_m), &\mbox{for each}&\alpha,\beta,m\in \N_0,
\end{eqnarray*}
where $\left(P_{\alpha,\beta}\right)_{\alpha,\beta\in \mathbb{N}_0}$ denotes the family of seminorms defined by \eqref{seminormP} in Lemma~\ref{Schwartz-Lemma}. These seminorms determine the topology of the spaces $\mathcal{D}_{k,n}(\R)$ and $\mathcal{S}_{k,n}(\R)$, respectively (see Lemma~\ref{Schwartz-Lemma}). Consequently, the above inequality shows that the inclusion \(
\mathcal{D}_{k,n}(\mathbb{R}) \hookrightarrow \mathcal{S}_{k,n}(\mathbb{R})
\) is continuous with respect to the seminorms $(P_{\alpha,\beta})_{\alpha,\beta\in \N_0}$, which completes the proof of Theorem \ref{Dkn-thm}.

\end{proof}

\section{Density in $L^p(d\mu_{k, n})$}\label{Lp-section}

\subsection{Preliminary results}\label{key-subsection}

In order to prove Theorem \ref{DensityThm}, it is necessary to use an approximation of the identity with a convolution structure generated by a contractive translation operator. In view of \eqref{normtrans} and \eqref{convolutionIntegral}, we restrict to the case $\displaystyle a=\frac{2}{n}$.

We recall that a family $(\varphi_r)_{r>0}$ is called an approximate identity if and only if it satisfies:
\begin{itemize}
	\item[(a)] $\varphi_r\in L^1(d\mu_{k,n})$ for all $r>0$;
	\item[(b)] $\displaystyle \int_{\mathbb{R}}\varphi_r(x)\,d\mu_{k,n}(x)=1$ for every $r>0$;
	\item[(c)] For any $\delta>0$, 
	\[
	\lim_{r\to0}\int_{|x|\geq\delta}|\varphi_r(x)|\,d\mu_{k,n}(x)=0.
	\]
\end{itemize}

One way to construct an approximate identity is similar to the construction considered by the second author in collaboration with Ben Sa\"id in \cite{BeNe2}. Namely, we choose non-negative function $\varphi\in L^1(d\mu_{k,n})$ satisfying
\[
\int_{\mathbb{R}}\varphi(x)\,d\mu_{k,n}(x)=1,
\]
the family of functions $(\varphi_r)_{r>0}$ is defined, for each $r>0$, by the formula
\begin{eqnarray}
	\label{rDilation}
	\varphi_r(x)=r^{-\left(2k+\frac{2}{n}-1\right)}\varphi\left(\frac{x}{r}\right), & x\in \R.
\end{eqnarray}

For our purposes, the following result is also required for the proof of Theorem \ref{DensityThm}:
\begin{thm}[Theorem 3.1 of \cite{BeNe2}]\label{BeNe2-Thm}
	Suppose that $(\varphi_r)_{r>0}$ is an approximation identity as in \eqref{rDilation}.
	\begin{itemize}
		\item[\bf (1)] If $f\in L^p(d\mu_{k, n})$, with $1\leq p<\infty$, then $f\star_{k ,n}\varphi_{r}\in L^p(d\mu_{k, n})$ and $f\star_{k ,n}\varphi_{r}\rightarrow f$ in the $L^p-$norm as $r\rightarrow 0$.
		\item[\bf (2)] If $f\in C_0(\R)$, then $f\star_{k ,n}\varphi_{r}\in L^\infty(d\mu_{k, n})$ and $f\star_{k ,n}\varphi_{r}\rightarrow f$ uniformly as $r\rightarrow 0$.
	\end{itemize}
\end{thm}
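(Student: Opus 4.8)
The plan is to follow the classical three-step route for approximate identities, adapting each step to the generalized convolution structure of \eqref{convolutionIntegral} and the contractive translation \eqref{normtrans}. First I would settle the membership and uniform boundedness claims. Since $\varphi_r\in L^1(d\mu_{k,n})$ by property (a), Young's inequality \eqref{YoungIneq} with exponents $p$, $1$, $p$ (noting $\tfrac1p+\tfrac11=\tfrac1p+1$) gives $f\star_{k,n}\varphi_r\in L^p(d\mu_{k,n})$ together with $\|f\star_{k,n}\varphi_r\|_{L^p(d\mu_{k,n})}\le A_{k,n}\|f\|_{L^p(d\mu_{k,n})}\|\varphi_r\|_{L^1(d\mu_{k,n})}$; the same inequality with $p=\infty$ yields the $L^\infty$-bound needed for statement (2). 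Because $\varphi$ is non-negative and normalized, property (b) forces $\|\varphi_r\|_{L^1(d\mu_{k,n})}=1$ for every $r$, so the operator norm of convolution against $\varphi_r$ is bounded by $A_{k,n}$ uniformly in $r$ -- exactly the input a later $3\varepsilon$-argument requires.

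Second, I would prove the convergence on a dense subclass of continuous functions. Using the normalization $\int_\R\varphi_r\,d\mu_{k,n}=1$, I would rewrite the difference $(f\star_{k,n}\varphi_r)(x)-f(x)$ as an integral in which $f(x)$ is absorbed into the $\varphi_r$-mass, reducing the estimate to controlling $(\tau^{k,n}_x f)(\cdot)-f$ weighted against $\varphi_r$. The geometric mechanism is that $d\nu^{k,n}_{x,y}$ is supported in the annular set $I_{x,y}$, so as the dilation parameter $r\to0$ the mass of $\varphi_r$ concentrates near the origin and $\tau^{k,n}_x$ is forced to act only on small scales, where it is close to the identity. Combined with property (c), which discards the contribution away from the origin, uniform continuity of $f\in C_0(\R)$ then yields uniform convergence -- giving statement (2) outright and the dense-subclass instance of statement (1).

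Third, I would pass to an arbitrary $f\in L^p(d\mu_{k,n})$ by density of $C_0(\R)$ (equivalently $C_c(\R)$) in $L^p(d\mu_{k,n})$. Writing $f=g+(f-g)$ with $\|f-g\|_{L^p(d\mu_{k,n})}<\varepsilon$ and $g$ continuous, the triangle inequality bounds $\|f\star_{k,n}\varphi_r-f\|_{L^p(d\mu_{k,n})}$ by the convolution error for $g$ (small as $r\to0$ by the second step), the term $\|(f-g)\star_{k,n}\varphi_r\|_{L^p(d\mu_{k,n})}\le A_{k,n}\varepsilon$ (the uniform Young bound from the first step), and $\|f-g\|_{L^p(d\mu_{k,n})}<\varepsilon$. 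Letting $r\to0$ and then $\varepsilon\to0$ closes the argument.

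I expect the main obstacle to be the second step, namely the continuity and localization of the generalized translation $\tau^{k,n}_x$. Because $\tau^{k,n}_x$ is defined through the kernel $\mathcal{K}_{k,n}$, which is not a positive kernel, the rewriting of $(f\star_{k,n}\varphi_r)(x)-f(x)$ and the interchange of integrals must be justified against a sign-changing measure rather than by a naive change of variables, and the localization as $r\to0$ has to be extracted purely from the support condition $\operatorname{supp} d\nu^{k,n}_{x,y}\subseteq I_{x,y}$. The contractivity estimate \eqref{normtrans} is what keeps every error term uniformly controlled in the absence of positivity.
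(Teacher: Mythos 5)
This statement is not proved in the paper at all: it is imported verbatim as Theorem 3.1 of \cite{BeNe2} (Ben Sa\"id--Negzaoui), and the authors use it as a black box in the proof of Theorem \ref{DensityThm}. So there is no in-paper proof to compare against; what you have written is a reconstruction of the argument one would expect to find in the cited reference.

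As a standalone plan, your three-step scheme (uniform Young bound with $\|\varphi_r\|_{L^1(d\mu_{k,n})}=1$, convergence on a dense class of continuous functions, then a $3\varepsilon$-argument via \eqref{normtrans} and \eqref{YoungIneq}) is the standard and correct route, and you correctly identify the genuine difficulty, namely that $\mathcal{K}_{k,n}$ is sign-changing so positivity arguments are unavailable and everything must be run through the uniform bound $A_{k,n}$. Two points should be made explicit rather than left implicit. First, the step where ``$f(x)$ is absorbed into the $\varphi_r$-mass'' requires that the generalized translation preserve the $d\mu_{k,n}$-integral, i.e.\ $\int_{\R}(\tau^{k,n}_x g)(y)\,d\mu_{k,n}(y)=\int_{\R}g(y)\,d\mu_{k,n}(y)$; this follows from the product formula \eqref{kernel} evaluated at $w=0$ together with $B_{k,n}(z,0)=1$, but it is not automatic for a non-positive measure and must be stated. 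Second, the computation that the dilation \eqref{rDilation} is exactly $L^1(d\mu_{k,n})$-normalized uses the homogeneity $d\mu_{k,n}(rx)=r^{2k+\frac{2}{n}-1}d\mu_{k,n}(x)$; this is the same homogeneity exploited in Proposition \ref{LpProposition}, and it is what makes the exponent $-(2k+\frac{2}{n}-1)$ in \eqref{rDilation} the right one. With those two items filled in, your sketch is a sound proof of the quoted theorem.
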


The following propositions, involving the spaces $L^p(d\mu_{k, n})$ and $\mathcal{D}_{k,n}(\R)$, are needed for the proof of Theorem \ref{DensityThm} in Subsection \ref{ProofTheorem2}:
\begin{proposition}\label{LpProposition}
	Let \( 1 \leq p < \infty \). For any function \( f \in L^p(d\mu_{k,n}) \), define the sequence of functions \((g_m)_{m\in\mathbb{N}_0}\), such that $g_m\in L^p(d\mu_{k,n})$, by
	\begin{equation}\label{gm}
		g_m(x) = \sum_{\ell=0}^m \binom{m}{\ell}\Bigl(2k+\frac{2}{n}\Bigr)^{m-\ell} \left(x\frac{d}{dx}\right)^{\ell} f(x), \quad x \in \mathbb{R}.
	\end{equation}
	
	Then, the following inequality holds:
	\begin{equation}\label{fLp}
		\|f\|_{L^p(d\mu_{k,n})} \leq \|g_m\|_{L^p(d\mu_{k,n})}.
	\end{equation}
\end{proposition}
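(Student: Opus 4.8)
The plan is to recognise the family $(g_m)$ as the iterates of a single first-order operator and to reduce the statement to a one-step inequality proved by duality. Set
$$
A := \left(2k+\tfrac{2}{n}\right)I + x\frac{d}{dx}.
$$
Since the scalar operator $\left(2k+\frac{2}{n}\right)I$ commutes with $B:=x\frac{d}{dx}$, the binomial theorem applied to \eqref{gm} gives $g_m = A^m f$, and more generally $g_\ell = A^\ell f$ for $0\le \ell\le m$. Consequently it suffices to prove the single-step estimate
$$
\|A g\|_{L^p(d\mu_{k,n})}\ \ge\ \|g\|_{L^p(d\mu_{k,n})}
$$
for every admissible $g$ (meaning the iterates $g_\ell$, $\ell\le m$, all lie in $L^p(d\mu_{k,n})$ and are locally absolutely continuous); chaining it along $g_0=f,\,g_1,\dots,g_m$ then yields $\|g_m\|_{L^p(d\mu_{k,n})}\ge\|g_{m-1}\|_{L^p(d\mu_{k,n})}\ge\cdots\ge\|f\|_{L^p(d\mu_{k,n})}$, which is \eqref{fLp}.

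To establish the single-step estimate I would use the duality characterisation of the $L^p$-norm. Assuming $g\neq 0$ and letting $p'$ be the conjugate exponent, put
$$
h=\frac{g\,|g|^{p-2}}{\|g\|_{L^p(d\mu_{k,n})}^{\,p-1}},
$$
so that $\|h\|_{L^{p'}(d\mu_{k,n})}=1$ and $\operatorname{Re}\!\int_{\R} g\,\overline{h}\,d\mu_{k,n}=\|g\|_{L^p(d\mu_{k,n})}$. By Hölder's inequality,
$$
\|Ag\|_{L^p(d\mu_{k,n})}\ \ge\ \operatorname{Re}\!\int_{\R}(Ag)\,\overline{h}\,d\mu_{k,n}.
$$
The contribution of $\left(2k+\frac{2}{n}\right)I$ to the right-hand side is $\left(2k+\frac{2}{n}\right)\|g\|_{L^p(d\mu_{k,n})}$. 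For the contribution of $B=x\frac{d}{dx}$ I would use the pointwise identity $\operatorname{Re}\!\big(\overline{g}\,|g|^{p-2}g'\big)=\tfrac1p\frac{d}{dx}|g|^p$ together with $\frac{d}{dx}\!\big(x\,|x|^{2k+\frac{2}{n}-2}\big)=\left(2k+\frac{2}{n}-1\right)|x|^{2k+\frac{2}{n}-2}$, and integrate by parts; this turns the $B$-contribution into $-\tfrac1p\left(2k+\frac{2}{n}-1\right)\|g\|_{L^p(d\mu_{k,n})}$. Altogether,
$$
\operatorname{Re}\!\int_{\R}(Ag)\,\overline{h}\,d\mu_{k,n}
=\theta\,\|g\|_{L^p(d\mu_{k,n})},\qquad
\theta:=\left(2k+\tfrac{2}{n}\right)-\frac{2k+\frac{2}{n}-1}{p}.
$$

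It then remains to check that $\theta\ge 1$. Writing $\theta=\left(2k+\frac{2}{n}\right)\!\left(1-\tfrac1p\right)+\tfrac1p$ exhibits $\theta$ as a convex combination of $2k+\frac{2}{n}$ and $1$; since the standing hypothesis $kn-\frac{n}{2}>-\frac12$ is equivalent to $2k+\frac{2}{n}>1+\frac1n>1$, this combination is at least $1$. Hence $\|Ag\|_{L^p(d\mu_{k,n})}\ge\theta\,\|g\|_{L^p(d\mu_{k,n})}\ge\|g\|_{L^p(d\mu_{k,n})}$, completing the single-step estimate and therefore the proof.

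I expect the main obstacle to be the rigorous justification of the integration by parts, namely that the boundary terms $\big[x\,|x|^{2k+\frac{2}{n}-2}\,|g|^p\big]$ vanish both at the origin and at infinity. Near $0$ this is aided by the exponent $2k+\frac{2}{n}-1>0$, which forces the boundary factor to decay, while the integrability assumptions $g,\,Ag\in L^p(d\mu_{k,n})$ control the behaviour at infinity; a standard cutoff/approximation argument makes both limits precise. The remaining ingredients — the binomial reduction, the duality pairing, and the elementary verification $\theta\ge1$ — are routine.
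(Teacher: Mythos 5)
Your argument is correct in substance but takes a genuinely different route from the paper's. Both proofs begin with the same reduction $g_m=A^m f$, where $A=\bigl(2k+\frac{2}{n}\bigr)I+x\frac{d}{dx}$, but the paper then inverts $A$ explicitly by the averaging operator $(\mathcal{T}_{k,n}h)(x)=\int_0^1 h(tx)\,t^{2k+\frac{2}{n}-1}\,dt$, verifies $g_m=\mathcal{T}_{k,n}g_{m+1}$ via the fundamental theorem of calculus, and deduces $\|g_m\|_{L^p(d\mu_{k,n})}\le\|g_{m+1}\|_{L^p(d\mu_{k,n})}$ from Minkowski's integral inequality together with the homogeneity $d\mu_{k,n}(tx)=t^{2k+\frac{2}{n}-1}d\mu_{k,n}(x)$; this avoids integration by parts entirely, hence any discussion of boundary terms or of the a.e.\ differentiability of $|g|^p$. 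Your duality/dissipativity argument instead shows the forward operator is expansive, $\|Ag\|_{L^p(d\mu_{k,n})}\ge\theta\,\|g\|_{L^p(d\mu_{k,n})}$ with $\theta=\bigl(2k+\frac{2}{n}\bigr)\bigl(1-\frac1p\bigr)+\frac1p\ge 1$, which indeed follows from the standing hypothesis $kn-\frac{n}{2}>-\frac12$ (equivalently $2k+\frac{2}{n}>1+\frac1n$). This is quantitatively sharper, giving $\|g_m\|\ge\theta^m\|f\|$ rather than mere monotonicity, but it shifts the technical burden onto the vanishing of the boundary terms $x|x|^{2k+\frac{2}{n}-2}|g|^p$ at $0$ and at infinity and onto the pointwise identity for $\frac{d}{dx}|g|^p$ when $1\le p<2$. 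Your outline for handling these (positivity of the exponent $2k+\frac{2}{n}-1$ at the origin, H\"older integrability of $|xg'|\,|g|^{p-1}$ at infinity since $xg'=Ag-\bigl(2k+\frac{2}{n}\bigr)g\in L^p(d\mu_{k,n})$, and an $\varepsilon$-cutoff around $0$ because the relevant functions are only assumed smooth on $\R\setminus\{0\}$) is the right plan and can be completed, so I would count this as a correct alternative proof rather than a gap; the paper's route is simply the more economical one for the stated (non-quantitative) inequality.
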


\begin{proof}
	Let us denote by \(\mathcal{T}_{k,n}\) the integral operator defined by

	\[
	(\mathcal{T}_{k,n} f)(x) = \int_{0}^{1} f(tx)\, t^{2k+\frac{2}{n}-1}\, dt, \quad x \in \mathbb{R}.
	\]

	For \(t > 0\), the chain rule gives

	\[
	\frac{d}{dt}\Bigl(t^{2k+\frac{2}{n}} f(tx)\Bigr) = \Bigl(2k+\frac{2}{n}\Bigr) t^{2k+\frac{2}{n}-1} f(tx) + t^{2k+\frac{2}{n}-1}\Bigl(tx \frac{d}{d(tx)}f\Bigr)(tx), \quad x \in \mathbb{R}.
	\]

	Integrating the previous expression yields
	\[
	f(x) = \int_0^1 \frac{d}{dt}\Bigl(t^{2k+\frac{2}{n}} f(tx)\Bigr)\, dt, \quad x \in \mathbb{R},
	\]
	which shows that \(\mathcal{T}_{k,n}\) is the inverse of the differential operator
	\[
	x\,\frac{d}{dx} + \Bigl(2k+\frac{2}{n}\Bigr)I.
	\]
	
	Next, let us consider the sequence of functions \(\left(g_m\right)_{m\in\mathbb{N}_0}\) defined by \eqref{gm}. We observe that
	$$	g_m(x) = (\mathcal{T}_{k,n}g_{m+1})(x), \quad x \in \mathbb{R}, \quad m \in \mathbb{N}_0.
	$$
	Furthermore, applying Minkowski's integral inequality, we obtain
	$$
	\|g_m\|_{L^p(d\mu_{k,n})}^p \leq \int_{0}^{1} \left(\int_{\mathbb{R}} \bigl|g_{m+1}(tx)\bigr|^p\, d\mu_{k,n}(x)\right) t^{2k+\frac{2}{n}-1}\, dt, \quad m \in \mathbb{N}_0.
	$$

	Since the measure \(d\mu_{k,n}\) is homogeneous of order \(2k+\frac{2}{n}-1\), i.e.,

	\[
	d\mu_{k,n}(tx) = t^{2k+\frac{2}{n}-1}\, d\mu_{k,n}(x), \quad \forall\, x \in \mathbb{R},\, t > 0,
	\]

	the above inequality simplifies to

	\[
	\|g_m\|_{L^p(d\mu_{k,n})}^p \leq \|g_{m+1}\|_{L^p(d\mu_{k,n})}^p, \quad m \in \mathbb{N}_0.
	\]

	An inductive argument then yields
	\begin{eqnarray*}
		\|f\|_{L^p(d\mu_{k,n})}^p \leq \|g_m\|_{L^p(d\mu_{k,n})}^p, \quad \forall\, m \in \mathbb{N}_0,
	\end{eqnarray*}
	which is equivalent to \eqref{fLp}.
\end{proof}

{\begin{proposition}\label{Lem}
		Let $\varphi\in \mathcal{D}_{k,n}(\R)$ and $f\in C^\infty(\R)$ have bounded supports $\operatorname{supp}(\varphi)\subseteq [-R_1,R_1]$ and $\operatorname{supp}(f)\subseteq [-R_2,R_2]$, where $R_1,R_2>0$. Then:
		\begin{itemize}
			\item[{\bf (a)}] $\operatorname{supp}(f\star_{k,n}\varphi)\subseteq [-R,R]$, with $R=\left((R_1)^\frac{1}{n}+(R_2)^\frac{1}{n}\right)^n$.
			\item[{\bf (b)}] $f\star_{k,n}\varphi$ belongs to $\mathcal{D}_{k,n}(\R)$.
		\end{itemize}
\end{proposition}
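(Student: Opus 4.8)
The plan is to dispatch (a) by reading the support off the integral representation, and then to bootstrap (b) from it using the intertwining relations \eqref{knFourierIntertwining} and the product formula \eqref{FourierknConvolution}.

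For (a), I would start from \eqref{convolutionIntegral} together with the localization of the measure $d\nu^{k,n}_{x,y}$ in \eqref{translation}--\eqref{Kkernel}. Fix $x$ with $|x|>R$; since $R>0$, $x\neq0$. For the integrand $f(y)\,(\tau^{k,n}_x\varphi)\bigl((-1)^ny\bigr)$ to be nonzero we need $|y|\le R_2$ (so $f(y)\neq0$) and $\varphi(z)\neq0$ for some $z$ in the support of $d\nu^{k,n}_{x,(-1)^ny}$, hence $|z|\le R_1$. For $y\neq0$ that support lies in $I_{x,(-1)^ny}$, on which $|z|^{\frac1n}>\bigl||x|^{\frac1n}-|y|^{\frac1n}\bigr|$; since $|x|^{\frac1n}>R_1^{\frac1n}+R_2^{\frac1n}\ge|y|^{\frac1n}$, this gives $|z|^{\frac1n}>|x|^{\frac1n}-|y|^{\frac1n}>R_1^{\frac1n}$, contradicting $|z|\le R_1$. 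The exceptional value $y=0$ gives $(\tau^{k,n}_x\varphi)(0)=\varphi(x)=0$ since $|x|\ge R_1$. Hence the integrand vanishes identically, $(f\star_{k,n}\varphi)(x)=0$, and $\operatorname{supp}(f\star_{k,n}\varphi)\subseteq[-R,R]$.

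For (b), I would reduce to checking the definition of $\mathcal{D}_{k,n}(\R)$ directly: by (a) the convolution has bounded support, so it suffices to prove it is $C^\infty$ on $\R\setminus\{0\}$ and that the seminorms $P_{0,\beta}\bigl(X^\ell D^\ell(f\star_{k,n}\varphi)\bigr)$ of \eqref{seminormP} — which, up to the constant $n^\beta$, are the defining seminorms of $\mathcal{D}_{k,n}(\R)$ — are finite for all $\beta,\ell\in\N_0$. Writing $\theta=x\frac{d}{dx}$, the key tools are two operator-passing identities, each proved by applying $\mathcal{F}_{k,n}$ to both sides, using \eqref{knFourierIntertwining}, and invoking \eqref{FourierknConvolution} with the injectivity of $\mathcal{F}_{k,n}$. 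The third relation in \eqref{knFourierIntertwining} gives
\begin{equation*}
\left(|x|^{2-\frac2n}\Delta_k\right)^\beta(f\star_{k,n}\varphi)=f\star_{k,n}\left(\left(|x|^{2-\frac2n}\Delta_k\right)^\beta\varphi\right),
\end{equation*}
and the first relation gives the Leibniz-type rule
\begin{equation*}
\theta(f\star_{k,n}\varphi)=(\theta f)\star_{k,n}\varphi+f\star_{k,n}(\theta\varphi)+\left(2k+\tfrac2n-1\right)(f\star_{k,n}\varphi).
\end{equation*}

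Iterating the Leibniz-type rule, $\theta^{\,j}(f\star_{k,n}\varphi)$ becomes a finite constant-coefficient combination of convolutions $(\theta^a f)\star_{k,n}(\theta^b\varphi)$, and since $X^\ell D^\ell=(\theta)_\ell=\theta(\theta-1)\cdots(\theta-\ell+1)$ the same is true of $X^\ell D^\ell(f\star_{k,n}\varphi)=x^\ell(f\star_{k,n}\varphi)^{(\ell)}$; applying the first identity then moves $\bigl(|x|^{2-\frac2n}\Delta_k\bigr)^\beta$ onto the second factor of each convolution. Each term is then a convolution of $\theta^a f$ — smooth with compact support, hence in every $L^s(d\mu_{k,n})$ — with $\bigl(|x|^{2-\frac2n}\Delta_k\bigr)^\beta\theta^b\varphi$, which by the normal-ordering expansion \eqref{NormalOrdering} and the finiteness of the $\mathcal{D}_{k,n}$-seminorms of $\varphi$ is bounded with bounded support, hence also in every $L^s(d\mu_{k,n})$. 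Young's inequality \eqref{YoungIneq} with $(p,r,q)=(1,\infty,\infty)$ then bounds each convolution in $L^\infty(d\mu_{k,n})$, whence $P_{0,\beta}\bigl(X^\ell D^\ell(f\star_{k,n}\varphi)\bigr)<\infty$. Smoothness on $\R\setminus\{0\}$ I would obtain by representing $f\star_{k,n}\varphi$ through the inversion integral \eqref{FourierInversionIntegral} and differentiating under the integral sign, justified by the rapid decay of $\mathcal{F}_{k,n}f,\mathcal{F}_{k,n}\varphi$ (Theorem \ref{Fkn-Schwartz-Theorem}) and the Bessel-function bounds of Lemma \ref{lem}. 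Together with (a), this shows $f\star_{k,n}\varphi\in\mathcal{D}_{k,n}(\R)$. I expect the Euler-operator step to be the main obstacle: unlike the Dunkl-Laplacian factor, $\theta$ does not commute with the convolution, so the extra zeroth-order term $\bigl(2k+\tfrac2n-1\bigr)(f\star_{k,n}\varphi)$ in the Leibniz-type rule must be carried through the $\ell$-fold iteration and reconciled with $X^\ell D^\ell=(\theta)_\ell$, requiring careful tracking of the constant coefficients.
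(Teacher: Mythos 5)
Your proof of \textbf{(a)} is essentially the paper's argument: both read the support condition off the localization of $d\nu^{k,n}_{x,y}$ in \eqref{Kkernel}, and your version is if anything more explicit about the triangle-type inequality on the $|\cdot|^{1/n}$ scale and the degenerate case $y=0$. For \textbf{(b)} you take a genuinely different route. The paper writes $f\star_{k,n}\varphi$ via \eqref{ConvolutionFormula}, pushes $\bigl(n|x|^{2-\frac2n}\Delta_k\bigr)^{\alpha}\bigl(x\frac{d}{dx}\bigr)^{\ell}$ onto the kernel $B_{k,n}((-1)^nx,y)$, invokes Lemma \ref{lem} to reduce to the Bessel-type terms, and bounds the resulting auxiliary integrals $G_{k,n,\eta}$ directly using $\mathcal{F}_{k,n}f\in L^\infty$ and the rapid decay of $\mathcal{F}_{k,n}\varphi$. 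You instead derive, from \eqref{knFourierIntertwining} and \eqref{FourierknConvolution}, that $\bigl(|x|^{2-\frac2n}\Delta_k\bigr)^{\beta}$ passes onto $\varphi$ and that $\theta=x\frac{d}{dx}$ obeys a Leibniz-type rule with the extra term $\bigl(2k+\frac2n-1\bigr)(f\star_{k,n}\varphi)$ (your computation of that rule is correct), and then close with Young's inequality \eqref{YoungIneq}. Your route buys a cleaner algebraic bookkeeping and avoids the explicit Bessel recursions; its cost is that you must justify applying the intertwining relations of \eqref{knFourierIntertwining} to $f\star_{k,n}\varphi$ itself --- the paper only asserts them on a dense subspace of $L^2(d\mu_{k,n})$, and a priori you do not yet know $f\star_{k,n}\varphi$ has the regularity needed to sit in that subspace, which is precisely what \textbf{(b)} is meant to establish. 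The standard repair is to prove the identities by differentiating under the integral sign in \eqref{ConvolutionFormula} using the eigenfunction property \eqref{JointEigenfunction} and Lemma \ref{lem} --- but at that point you have reconstructed the paper's argument, and indeed you already fall back on exactly that kernel machinery to get smoothness on $\R\setminus\{0\}$. So the two proofs converge where the real analytic work is done; your presentation is a legitimate and arguably more transparent reorganization, provided you add the justification for the operator-passing identities.
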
}

\begin{proof}	
	Starting from \eqref{Kkernel}, the condition $\operatorname{supp}(f) \subseteq [-R_2,R_2]$ implies that $$
	\operatorname{supp}(\tau_{k,n}^yf)\subseteq \left[-\left(|y|^\frac{1}{n}+(R_2)^\frac{1}{n}\right)^n,\left(|y|^\frac{1}{n}+(R_2)^\frac{1}{n}\right)^n\right],$$
	where $\tau_{k,n}^y$ is the generalized translation operator defined in \eqref{translation}. Therefore, the proof of \textbf{(a)} follows immediately from the definition of the convolution operation $\star_{k,n}$, given in equation  \eqref{convolutionIntegral}.
	
	To prove {\bf (b)}, note that $f\star_{k,n}\varphi \in \mathcal{D}_{k,n}(\R)$ if
	$$
	G(f\star_{k,n} \varphi):=\sup_{x\in \R\setminus\{0\}}	\left|\left(~n\left|~x~\right|^{2-\frac{2}{n}}\Delta_k~\right)^\alpha \left( nx\frac{d}{dx}\right)^\ell\left(f\star_{k,n} \varphi\right)(x)\right| 
	$$
	is finite.
	
	By the $\kn$-generalized convolution formula \eqref{convolutionIntegral}, we have
	{$$
		\left(~n\left|~x~\right|^{2-\frac{2}{n}}\Delta_k~\right)^\alpha \left(nx\frac{d}{dx}\right)^\ell \left(f\star_{k,n} \varphi\right)(x)=$$ $$\int_{\mathbb{R}}(\mathcal{F}_{k,n} f)(y)~(\mathcal{F}_{k,n}\varphi)(y)~ \left(~n\left|~x~\right|^{2-\frac{2}{n}}\Delta_k~\right)^\alpha \left(x\frac{d}{dx}\right)^\ell B_{k,n}\left(~(-1)^nx,y~\right)d\mu_{k,n}(y), \quad x\in \mathbb{R}.$$}
	
	Then, by applying Proposition \ref{Bkn-xdxProposition} we obtain 
	\begin{eqnarray*}
		G(f\star_{k,n} \varphi)\leq \int_{\mathbb{R}}\left|~(\mathcal{F}_{k,n} f)(y)\right|~\left|(\mathcal{F}_{k,n}\varphi)(y)~\right|~n^\alpha |y|^{\frac{2\alpha}{n}}{\bf N}_\ell\left(n|xy|^{\frac{1}{n}}\right)d\mu_{k,n}(y),
	\end{eqnarray*}
	where ${\bf N}_\ell(z)$ is the polynomial of degree $\ell$ with all positive coefficients appearing in the estimate~\eqref{Bcondition-ellDeltak}.
	
	So, for every $x\in [-R,R]$ we arrive at the estimate
		\begin{eqnarray}
			\label{ConvIneq}
		G(f\star_{k,n} \varphi)\leq n^\alpha \int_{\mathbb{R}}\left|~(\mathcal{F}_{k,n} f)(y)\right|~|y|^{\frac{2\alpha}{n}}{\bf N}_\ell\left(nR^{\frac{1}{n}}|y|^{\frac{1}{n}}\right)\left|(\mathcal{F}_{k,n}\varphi)(y)~\right|~ d\mu_{k,n}(y).
	\end{eqnarray}
	
	

{Since \(f\in L^1(d\mu_{k,n})\) implies that \(\mathcal{F}_{k,n}f\in L^\infty(d\mu_{k,n})\), and \(\varphi\in \mathcal{D}_{k,n}(\R)\subseteq\mathcal{S}_{k,n}(\R)\)  ensures that  \(\mathcal{F}_{k,n}\varphi\in \mathcal{S}_{k,n}(\R)\) (by Theorem \ref{Fkn-Schwartz-Theorem}),  it follows that the function $$y\mapsto |y|^{\frac{2\alpha}{n}}{\bf N}_\ell\left(nR^{\frac{1}{n}}|y|^{\frac{1}{n}}\right)\mathcal{F}_{k,n}\varphi(y)$$ belongs to $L^1(d\mu_{k,n})$.} 
	Then, from \eqref{ConvIneq} we immediately get
	\begin{eqnarray*}
		G(f\star_{k,n} \varphi)\leq n^\alpha \| \mathcal{F}_{k,n}f\|_{L^\infty(d\mu_{k, n})}~\left\| ~|\cdot|^{\frac{2\alpha}{n}}{\bf N}_\ell\left(nR^{\frac{1}{n}}|\cdot|^{\frac{1}{n}}\right)\mathcal{F}_{k,n}\varphi~\right\|_{L^1(d\mu_{k,n})}<\infty,
	\end{eqnarray*}
as required.
\end{proof}

\subsection{Proof of Theorem \ref{DensityThm} }\label{ProofTheorem3}

\begin{proof}[Proof of {\bf (i)}]
Let \(f \in \mathcal{S}_{k,n}(\mathbb{R})\) and $(g_m)_{m\in \N_0}$ be the sequence of functions defined by \eqref{gm} in Proposition \ref{LpProposition}. From the observation that 
\begin{equation}\label{Qgm}\Bigl(1+|x|^{\frac{2}{n}}\Bigr)^\beta g_m(x) = \sum_{\alpha=0}^{\beta} \binom{\beta}{\alpha} \Bigl(|x|^{\frac{2}{n}}\Bigr)^\alpha g_m(x), \quad x \in \mathbb{R},
\end{equation}
 one can easily check that for every \(\beta, m\in \mathbb{N}_0\), $g_m\in \mathcal{S}_{k,n}(\R)$, the quantity \[ Q_{\beta}(g_m):=\sup_{x\in \R\setminus \{0\}}	\Bigl(1+|x|^{\frac{2}{n}}\Bigr)^\beta \bigl|g_m(x)\bigr| \] is finite.

Then, by choosing $\beta\in \N$ such that \(\beta > \frac{n}{2p}\Bigl(2k+\frac{2}{n}-1\Bigr)\), it holds from \eqref{knMeasure} that \[ \left\|\Bigl(1+|\cdot|^{\frac{2}{n}}\Bigr)^{-\beta}\right\|_{L^p(d\mu_{k,n})}^p = c_{k,n} \int_{\mathbb{R}} \Bigl(1+|x|^{\frac{2}{n}}\Bigr)^{-\beta p}|x|^{2k+\frac{2}{n}-2}\, dx \] is convergent.

Thus, from the inequality \eqref{fLp} given by Proposition \ref{LpProposition} and from \begin{eqnarray*}
	\|g_m\|_{L^p(d\mu_{k,n})} &=& \left(\int_{\mathbb{R}} \Bigl(1+|x|^{\frac{2}{n}}\Bigr)^{-\beta p} \Bigl|\Bigl(1+|x|^{\frac{2}{n}}\Bigr)^{\beta}g_m(x)\Bigr|^p\, d\mu_{k,n}(x)\right)^{\frac{1}{p}}
\end{eqnarray*} we get \begin{eqnarray*}
	\|f\|_{L^p(d\mu_{k,n})}  \leq 	\|g_m\|_{L^p(d\mu_{k,n})} \leq   \left\|\Bigl(1+|\cdot|^{\frac{2}{n}}\Bigr)^{-\beta}\right\|_{L^p(d\mu_{k,n})} Q_\beta(g_m).
\end{eqnarray*}

{Using \eqref{Qgm}, \eqref{gm} and \eqref{NormalOrdering}, we get 
$$Q_\beta(g_m)\leq \sum_{\alpha=0}^{\beta} \binom{\beta}{\alpha} \sum_{\ell=0}^m \binom{m}{\ell}\Bigl(2k+\frac{2}{n}\Bigr)^{m-\ell} \sum_{j=1}^{\ell} S(\ell,j)~ P_{\alpha,0}( X^jD^jf).$$ 

Since $P_{\alpha,\beta}(X^j D^j f)$ define the seminorms on $\mathcal{S}_{k,n}(\R)$, then we deduce the continuous embedding $\mathcal{S}_{k,n}(\R)\hookrightarrow L^p(d\mu_{k,n})$.} 
\end{proof}

\begin{proof}[Proof of {\bf (ii)}]
		Since statement {\bf (i)} and Proposition \ref{Dkn-thm} (see statement {\bf (ii)}) already yield the continuous embeddings 
	\[
	\mathcal{S}_{k,n}(\mathbb{R}) \hookrightarrow L^p(d\mu_{k,n})  \quad \text{and} \quad \mathcal{D}_{k,n}(\mathbb{R}) \hookrightarrow \mathcal{S}_{k,n}(\mathbb{R}),
	\]
	it remains only to show that \(\mathcal{D}_{k,n}(\mathbb{R})\) is dense in \(L^p(d\mu_{k,n})\).
	
Let \(f \in L^p\bigl(d\mu_{k,n}\bigr)\). Since \(\mathcal{D}(\mathbb{R})\) (i.e., the space of \(C^\infty\) functions on \(\mathbb{R}\) with compact support) is dense in \(L^p\bigl(d\mu_{k,n}\bigr)\), there exists a function \(g \in \mathcal{D}(\mathbb{R})\) such that for every \(\varepsilon > 0\), there holds
	\begin{equation*}
		\|f-g\|_{L^p\bigl(d\mu_{k,n}\bigr)} < \frac{\varepsilon}{2}.
	\end{equation*}
	
Choose \(\varphi \in \mathcal{D}_{k,n}(\mathbb{R})\) satisfying

	\[
	\int_{\mathbb{R}} \varphi(x)\, d\mu_{k,n}(x) = 1,
	\]
	and consider the family of functions \((\varphi_r)_{r>0}\) defined as in \eqref{rDilation}.
By Definition \ref{Dkn-definition}, one has that  \((\varphi_r)_{r>0}\) forms an approximation of the identity such that $\varphi_r\in \mathcal{D}_{k,n}(\R)$, for each $r>0$. Consequently, by Theorem \ref{BeNe2-Thm} the convolution \(\varphi_r \star_{k,n} g\) converges to \(g\) in \(L^p(d\mu_{k,n})\). That is,
	\begin{equation*}
		\forall\,\varepsilon>0~\exists\, r_0>0 \text{~:~} \forall\,r\geq r_0 \Longrightarrow  \|{\varphi_{r}}\star_{k,n} g - g\|_{L^p\bigl(d\mu_{k,n}\bigr)} < \frac{\varepsilon}{2}.
	\end{equation*}
	
Moreover, Lemma \ref{Lem} ensures that \({\varphi_{r}}\star_{k,n} g \in \mathcal{D}_{k,n}(\mathbb{R})\). Then, for any \(r \ge r_0\), we obtain by the triangle inequality
	\begin{align*}
		\|f - {\varphi_{r}}\star_{k,n} g\|_{L^p\bigl(d\mu_{k,n}\bigr)} &\leq \|f-g\|_{L^p\bigl(d\mu_{k,n}\bigr)} + \|{\varphi_{r}}\star_{k,n} g - g\|_{L^p\bigl(d\mu_{k,n}\bigr)} \\
		&< \frac{\varepsilon}{2} + \frac{\varepsilon}{2} \\
		&= \varepsilon.
	\end{align*}

So, \(\mathcal{D}_{k,n}(\mathbb{R})\) is dense in \(L^p\bigl(d\mu_{k,n}\bigr)\), which completes the proof.
\end{proof}

\section{Conclusion and Open Problems}\label{ConclusionOpen}

\subsection{Conclusion}

We introduced a Schwartz-type space $\mathcal{S}_{k,n}(\R)$, invariant under the one-dimensional $\kn-$generalized Fourier transform $\mathcal{F}_{k,n}:=\mathcal{F}_{k,\frac{2}{n}}$. Its continuous and dense embedding into the weighted Lebesgue spaces $L^p(d\mu_{k,n})$ for every $1 \leq p < \infty$ makes them the appropriate function space for the $\kn-$generalized Fourier transform. In particular, Theorems~\ref{Fkn-Schwartz-Theorem} -- \ref{DensityThm} fail in general for the classical Schwartz space $\mathcal{S}(\mathbb{R}^{N})$ when $a \neq 2$, as proved in \cite[Example 5.1]{GoIvTi23}.

Our approach demonstrates that one can restore invariance by modifying the seminorms in a way compatible with the Laguerre semigroup of Ben Sa\"id--Kobayashi--\O rsted \cite{BKO} for $N=1$ and $\displaystyle a=\frac{2}{n}$. The analytic tools are closely related to R\"osler's work on product formulas and translations \cite{Ro99,Ro03}, and involve product formulas for Bessel-type kernels \cite{BNS}, and approximation-of-identity type  arguments~\cite{BeNe2}. This fuses harmonic analysis with representation theory, extending to rapidly decaying functions -- such as the deformations of the Gaussian $e^{-x^2}$, given by the sequence $\left(f_n\right)_{n\in \N}$ defined by $\displaystyle f_n(x)=e^{-n|x|^{\frac{2}{n}}}$. Figures \ref{fig1}--\ref{fig3} numerically illustrate these functions and their $\mathcal{F}_{k,n}$ images for $n=1,\ldots,5$,  confirming their rapidly decreasing decay at infinity (and hence belongs to every $L^p$-space) and the shape preservation of $(\mathcal{F}_{k,n} f_n)(y)$ with seminorm decay (unlike the distorted case for $n=1$).


First, we notice that these functions and their $\kn-$Fourier transforms decay exponentially at infinity, landing them comfortably in every $L^p-$space.
Second, each $f(x)=e^{-n|x|^{\frac{2}{n}}}$ mirrors the shape of its own $(\mathcal{F}_{k,n} f)(y)$ -- unlike the Gaussian ($n=1$), whose $\kn-$generalized Fourier distorts its smooth shape (see Figure \ref{fig3}). 
These observations support the choice of $\mathcal{S}_{k,n}(\mathbb{R})$ rather than the classical Schwartz space $\mathcal{S}(\mathbb{R})$ to deal with $\kn-$generalized Fourier transforms in the one-dimensional case.

In conclusion, our approach is a first step toward obtaining Schwartz-type spaces adapted to $\kn-$generalized Fourier transforms.
This suggests that a similar strategy could be employed in higher dimensions, which is the topic we will now address.

\begin{figure}[ht]
\begin{tikzpicture}
	\begin{axis}[
		width=12cm, height=8cm,
		xlabel={$x$},
		ylabel={$f(x)$},
		xmin=-3, xmax=3,
		ymin=0, ymax=1.1,
		grid=both,
		legend style={at={(0.98,0.98)}, anchor=north east, cells={align=left}},
		legend cell align=left,
		smooth,
		thick,
		domain=-3:3,
		samples=200,
		]
		
		\addplot[blue, line width=1.5pt] {exp(-x^2)};
		\addlegendentry{$n=1$ };
		
		\addplot[red, dashed, line width=1.5pt] {exp(-2*abs(x)^(1))}; 
		\addlegendentry{$n=2$};
		
		\addplot[green!70!black, dashed, line width=1.5pt] {exp(-3*abs(x)^(2/3))};
		\addlegendentry{$n=3$};
		
		\addplot[orange, dashed, line width=1.5pt] {exp(-4*abs(x)^(0.5))}; 
		\addlegendentry{$n=4$};
		
		\addplot[purple, dashed, line width=1.5pt] {exp(-5*abs(x)^(0.4))}; 
		\addlegendentry{$n=5$};
		
	\end{axis}
\end{tikzpicture}
	\caption{Plot of $f(x)= e^{-n|x|^{\frac{2}{n}}}$ for $n\in \{1,2,3,4,5\}$.}
\label{fig1}
\end{figure}

\begin{figure}[ht]
\begin{tikzpicture}
	\begin{axis}[
		width=12cm, height=8cm,
		xlabel={$y$},
		ylabel={$(\mathcal{F}_{1,n} f)(y)$},
		xmin=-8, xmax=8,
		ymin=0, ymax=0.4,
		grid=both,
		legend style={at={(0.98,0.98)}, anchor=north east, cells={align=left}},
		legend cell align=left,
		smooth,
		thick,
		domain=-8:8,
		samples=300,
		]
		
		
		\addplot[blue, line width=1.5pt] {1/(2^(1 - 0.5 + 1)) * exp(-1*abs(x)^(2/1)/4)};
		\addlegendentry{$n=1$};
		
		\addplot[red, dashed, line width=1.5pt] {1/(2^(2 - 1 + 1)) * exp(-2*abs(x)^(1)/4)};
		\addlegendentry{$n=2$};
		
		\addplot[green!70!black, dashed, line width=1.5pt] {1/(2^(3 - 1.5 + 1)) * exp(-3*abs(x)^(2/3)/4)};
		\addlegendentry{$n=3$};
		
		\addplot[orange, dashed, line width=1.5pt] {1/(2^(4 - 2 + 1)) * exp(-4*abs(x)^(0.5)/4)};
		\addlegendentry{$n=4$};
		
		\addplot[purple, dashed, line width=1.5pt] {1/(2^(5 - 2.5 + 1)) * exp(-5*abs(x)^(0.4)/4)};
		\addlegendentry{$n=5$};
		
	\end{axis}
\end{tikzpicture}
	\caption{Plot of the $(k,1)-$generalized Fourier transform of $f(x)=e^{-n|x|^{\frac{2}{n}}}$ for $n\in \{1,2,3,4,5\}$.}
\label{fig2} 
\end{figure}

\begin{figure}[ht]
	\centering
	\begin{tikzpicture}
		\begin{axis}[
		width=12cm, height=8cm,
		xlabel={$y$},
		ylabel={$(\mathcal{F}_{k,n} f)(y)$},
		xmin=-3, xmax=3,
		ymin=0, ymax=2.1,
		grid=both,
		legend style={at={(0.98,0.98)}, anchor=north east, cells={align=left}},
		legend cell align=left,
		smooth,
		thick,
		domain=-3:3,
		samples=200,
		]
			
			\addplot[blue, line width=1.5pt] table [x index=0, y index=1, col sep=comma] {F_a_2.000.csv};
			\addlegendentry{$n=1$}
			
			\addplot[red, dashed, line width=1.5pt] table [x index=0, y index=1, col sep=comma] {F_a_1.000.csv};
			\addlegendentry{$n=2$}
			
			\addplot[green!70!black, dashed, line width=1.5pt] table [x index=0, y index=1, col sep=comma] {F_a_0.667.csv};
			\addlegendentry{$n=3$}
			
			\addplot[orange, dashed, line width=1.5pt]  table [x index=0, y index=1, col sep=comma] {F_a_0.500.csv};
			\addlegendentry{$n=4$}
			
			\addplot[purple, dashed, line width=1.5pt]  table [x index=0, y index=1, col sep=comma] {F_a_0.400.csv};
			\addlegendentry{$n=5$}
			
		\end{axis}
	\end{tikzpicture}
\caption{Plot of the $\kn-$generalized Fourier transform of $f(x)=e^{-x^{2}}$ for $n\in \{1,2,3,4,5\}$.}
	\label{fig3}
\end{figure}




\subsection{Open Problems}

The construction of $\mathcal{S}_{k,n}(\mathbb{R})$  is rooted in  $\mathfrak{sl}(2,\R)–$symmetries and formally extends to an arbitrary dimension $N$ and all parameters $a>0$. Extending our results beyond $N=1$ requires uniform bounds or precise polynomial growth estimates for the kernel $B_{k,a}$ in higher dimensions, which are currently known only in special cases such as $a=1,2$ or $k\equiv 0$ with $a\in \mathbb{Q}_+$. Guided by the known estimates for $B_{k,a}$ when $a=1,2$ and by the recent work of De Bie, Lian and Maes \cite{deBieLM25} on kernel bounds in the case $k\equiv 0$ and $a\in \mathbb{Q}_+$, we formulate the following conjecture:

\newpage 
~
\newpage 

\begin{conjecture}\label{kaSchwartz_Conjecture}Let $\mathcal{S}_{k,a}(\R^N)$ be the space of all {$f\in C^\infty(\R^N\setminus\{0\})$}	such that 
	\begin{eqnarray*} \sup_{x\in \R^N\setminus\{0\}}~\left|~\left(\left\|x\right\|^{a}\right)^{\alpha} \left(\left\|x\right\|^{2-a}\Delta_{k}\right)^\beta \left(\|x\|^\ell ~\frac{\partial^\ell f(x)}{\partial \|x\|^\ell}\right)~\right|<\infty, ~~\mbox{for all}~~\alpha,\beta,\ell \in \N_0 \end{eqnarray*}
	
 Then, 	for the $(k,a)-$generalized Fourier transform as defined in \eqref{FkaRN} we have that $$\mathcal{F}_{k,a}\left(\mathcal{S}_{k,a}(\R^N)\right)=\mathcal{S}_{k,a}(\R^N),$$ provided that \(a>0\) and \(N \in \mathbb{N}\) are such that\begin{enumerate}
\item $a=1$ or $a=2$.
\item $a=\dfrac{2^m}{n}$ ($m,n\in \N$) and $N=2$.
\item[~]
\item $a=\dfrac{p}{q}\in \mathbb{Q}_+$ and $N\geq 2$ is even.	\end{enumerate}\end{conjecture}

Case (2) of Conjecture~\ref{kaSchwartz_Conjecture} is consistent with the uniform boundedness of $B_{k,a}$ when $a=1,2$ (cf.~\cite[Theorem~5.11]{BKO}) and of $B_{0,a}$ for $N=2$ and $\displaystyle a=\frac{2^m}{n}$ (cf.~\cite[Theorem~5.4]{deBieLM25}). It is also compatible with polynomial growth estimates for even $\displaystyle N\ge 2$ (cf.~\cite[Theorem~3.3]{deBieLM25}) and with the failure of boundedness in certain cases such as $a=4$ (cf.~\cite[Theorem~2.1]{deBieLM25}). 

For odd dimensions $N>1$, Conjecture \ref{kaSchwartz_Conjecture} remains open and seems to require a finer large-parameter analysis of the kernel $B _{k,a}$. Such estimates would also clarify whether the even–odd dichotomy, which is already apparent for $k\equiv 0$, persists in the $(k,a)-$generalized Fourier setting. Specifically, one would need estimates that are uniform in the angular variables and compatible with the commutation relations of the $\mathfrak{sl}(2,\R)$–action, to reproduce the $L^{1}-L^{\infty}$ kernel estimates and convolution bounds used in the one-dimensional proof. 

Making progress in this direction would clarify how the $(k,a)-$generalized Fourier transform behaves differently in even and odd dimensions, and whether the results already proved in \cite{deBieLM25} for $k \equiv 0$ also holds for a general $k$.

Beyond Conjecture \ref{kaSchwartz_Conjecture}, several concrete problems appear within reach, even in low-dimensional test cases such as $k\equiv 0$, $\displaystyle a=\frac{2^m}{n}$. Natural questions include establishing continuous embeddings and density results for $\mathcal{S}_{k,a}(\R^{N})$ in suitable weighted $L^{p}$–spaces whenever $\mathcal{F}_{k,a}-$invariance is available, extending Theorem~\ref{DensityThm} to higher dimensions, as well as formulating and proving uncertainty principles, Paley–Wiener type theorems, and Hardy-type inequalities for $\mathcal{F}_{k,a}$ within the framework of $\mathcal{S}_{k,a}(\mathbb{R}^{N})$, in the spirit of \cite{Jo}. 


\begin{thebibliography}{30}
	
	
	\bibitem{AS86}
	Abramowitz, M., \& Stegun, I. A. (Eds.). (1968). \textit{Handbook of mathematical functions with formulas, graphs, and mathematical tables} (Vol. 55). US Government printing office.
	
\bibitem{Bateman53}	Bateman, H., Erdélyi, A., Magnus W., Oberhettinger F., \& Tricomi F.~G. (1953). \textit{Higher transcendental functions, volume II}. Mc Graw-Hill Book Company, 410.

	
	
	\bibitem{BKO}
Sa\"id, S. B., Kobayashi, T., \& \O rsted, B. (2012). \textit{Laguerre semigroup and Dunkl operators}. Compositio Mathematica, 148(4), 1265-1336. \newline {\bf DOI:}~\href{https://doi.org/10.1112/S0010437X11007445}{10.1112/S0010437X11007445}
	
	
	
	
	
	
	\bibitem{BeNe2}  Ben Sa\"id, S., \& Negzaoui, S. (2022). \textit{Flett potentials associated with differential-difference Laplace operators}. Journal of Mathematical Physics, 63(3).
	\newline {\bf DOI:}~ \href{https://doi.org/10.1063/5.0063053}{10.1063/5.0063053}
	
	
	
	
	\bibitem{BNS} Boubatra, M. A., Negzaoui, S., \& Sifi, M. (2022). \textit{A new product formula involving Bessel functions}. Integral Transforms and Special Functions, 33(3), 247-263. 
		\newline {\bf DOI:}~ \href{https://doi.org/10.1080/10652469.2021.1926454}{10.1080/10652469.2021.1926454}
		
		\bibitem{ChTr} 
		Chettaoui, C., \& Trimèche, K. (2003). \textit{New type Paley-Wiener theorems for the Dunkl transform on R}. Integral Transforms and Special Functions, 14(2), 97-115. \newline
		{\bf DOI:}~\href{https://doi.org/10.1080/10652460290029635}{10.1080/10652460290029635}

	
	\bibitem{deBieLM25} 
	De Bie, H., Lian, P., \& Maes, F. (2025). \textit{Bounds for the kernel of the $(k, a) $-generalized Fourier transform}. Journal of Functional Analysis Volume 288, Issue 4, 15 February 2025, 110755 
\newline {\bf DOI:}~\href{https://doi.org/10.1016/j.jfa.2024.110755}{10.1016/j.jfa.2024.110755}
	
	\bibitem{De}
	Delorme, P., \& Tinfou, M. (1999). \textit{Espace de Schwartz pour la transformation de Fourier hypergéométrique}. Journal of functional analysis, 168(1), 239-312.
	\newline
	{\bf DOI:}~\href{https://doi.org/10.1006/jfan.1999.3449}{10.1006/jfan.1999.3449}
	
	\bibitem{Du89} {Dunkl, C.F.} \textit{Differential-difference operators associated with reflections groups}. Trans. Amer. Math. Soc. 1989, 311, 167-183.\newline {\bf DOI:}~\href{https://doi.org/10.1090/S0002-9947-1989-0951883-8}{10.1090/S0002-9947-1989-0951883-8}
	
	\bibitem{Du91} 
	Dunkl, C.F. (1991). \textit{Integral kernels with reflection group invariance}. Canadian Journal of Mathematics, 43(6), 1213-1227. \newline
	{\bf DOI:}~\href{https://doi.org/10.4153/CJM-1991-069-8}{10.4153/CJM-1991-069-8} 
	
	\bibitem{Du92}{Dunkl, C.F. \textit{Hankel transforms associated to finite reflection groups}. Hypergeometric functions on domains of positivity, Jack polynomials, and applications (Tampa, FL, 1991), 123-138.
		Contemp. Math., 138
		American Mathematical Society, Providence, RI, 1992}
	\newline	{\bf DOI:}~\href{https://doi.org/10.1090/conm/138}{10.1090/conm/138} 
	
	
	
	
	\bibitem{GoIvTi23} Gorbachev, D., Ivanov, V., \& Tikhonov, S. (2023, January). \textit{On the kernel of the $(k,a)-$generalized Fourier transform}. In Forum of Mathematics, Sigma (Vol. 11, p. e72).	\newline	{\bf DOI:}~\href{https://doi.org/10.1017/fms.2023.69 }{10.1017/fms.2023.69} 
	

	
	
	\bibitem{Hormander03} 
	Hormander, L. (2003). \textit{The Analysis of Linear Partial Differential Operators I: Distribution Theory and Fourier Analysis}. Springer Berlin, Heidelberg.
	\newline
	{\bf DOI:}~\href{https://doi.org/10.1007/978-3-642-61497-2}{10.1007/978-3-642-61497-2} 
	
	
	
	
	
	
	
	
	\bibitem{Jeu93} 
	de Jeu, M. F. (1993). \textit{The dunkl transform}. Inventiones mathematicae, 113, 147-162. 
	\newline
	{\bf DOI:}
	 \href{https://doi.org/10.1007/BF01244305}{10.1007/BF01244305}
	
	\bibitem{Jeu06} 
	de Jeu, M. (2006). \textit{Paley-Wiener theorems for the Dunkl transform}. Transactions of the American Mathematical Society, 358(10), 4225-4250.
	\newline
	{\bf DOI:}~\href{https://doi.org/10.1090/S0002-9947-06-03960-2}{10.1090/S0002-9947-06-03960-2}
	
	
	\bibitem{Jo}{Johansen, T. R. (2016). \textit{Weighted inequalities and uncertainty principles for the $(k,a)-$generalized Fourier transform}. International Journal of Mathematics, 27(03), 1650019. \newline {\bf DOI:}~\href{https://doi.org/10.1142/S0129167X16500191}{https://doi.org/10.1142/S0129167X16500191}}
	
	\bibitem{Katriel74}
	Katriel, J., 1974. \textit{Combinatorial aspects of boson algebra}. Lett. Nuovo Cimento, 10(13), pp.565-567.
	\newline
	{\bf DOI:}~\href{https://doi.org/10.1007/BF02784783}{10.1007/BF02784783}

	
	
	
	
	
	\bibitem{Ro98} 
	R\"osler, M. (1998). \textit{Generalized Hermite polynomials and the heat equation for Dunkl operators}. Communications in Mathematical Physics, 192, 519-542.
	\newline
	{\bf DOI:}~\href{https://doi.org/10.1007/s002200050307}{10.1007/s002200050307}
	
	\bibitem{Ro99} 
	R\"osler, M. (1999). \textit{Positivity of Dunkl’s intertwining operator}. Duke Math. J., 100(1), 445-463.
	\newline
	{\bf DOI:}~\href{https://doi.org/10.1155/S1073792804141901}{10.1155/S1073792804141901}
	
	\bibitem{Ro03} 
	R\"osler, M. (2003). \textit{A positive radial product formula for the Dunkl kernel}. Transactions of the American Mathematical Society, 355(6), 2413-2438.
	\newline
	{\bf DOI:}~\href{https://doi.org/10.1090/S0002-9947-03-03235-5}{10.1090/S0002-9947-03-03235-5}
	
	\bibitem{Ro03-book} R\"osler, M. \textit{Dunkl Operators: Theory and Applications}, in Orthogonal Polynomials and Special Functions; Lecture Notes in Mathematics; Springer: Berlin/Heidelberg, Germany, 2003; Volume 1817, pp. 93-135.
	\newline 
{\bf DOI:}~\href{https://doi.org/10.1007/3-540-44945-0_3}{10.1007/3-540-44945-0\underline{~}3}
	
		\bibitem{BeDa} 
	Salem, N. B., \& Dachraoui, A. (1998). \textit{Pseudo-differential operators associated with the Jacobi differential operator}. Journal of mathematical analysis and applications, 220(1), 365-381.
	\newline
	{\bf DOI:}~\href{https://doi.org/10.1006/jmaa.1997.5891}{10.1006/jmaa.1997.5891}
	
	

	
	
		\bibitem{Tr81} 	Trimeche, K. (1981). \textit{Transformation intégrale de Weyl et théoreme de Paley-Wiener associés a un opérateur différentiel singulier sur $(0,\infty)$}, J. Math. Pures Appl, 60(1), 51-98.
		
		\bibitem{Tr2001Book} Trimeche, K. (2001). \textit{Generalized Harmonic Analysis and Wavelet Packets: An Elementary Treatment of Theory and Applications} (1st ed.). CRC Press.\newline
		{\bf DOI:}~\href{https://doi.org/10.1201/9781482283174}{10.1201/9781482283174}
	
	\bibitem{Tr01} 
	Trim\` eche, K. (2001). \textit{The Dunkl intertwining operator on spaces of functions and distributions and integral representation of its dual}. Integral Transforms Spec. Funct., 12, 349-374.
	\newline
	{\bf DOI:}~\href{https://doi.org/10.1080/10652460108819358}{10.1080/10652460108819358}
	
	
	\bibitem{Tr02} Trim\` eche, K. \textit{Paley-Wiener Theorems for the Dunkl transform and Dunkl translation operators}. Integral Transform. Spec. Funct. 2002, 13, 17-38.
	\newline {\bf DOI:}~\href{https://doi.org/10.1080/10652460212888}{https://doi.org/10.1080/10652460212888}
	
	
	

	
	







\end{thebibliography}
\end{document}